\newtheorem{theorem}{Theorem}
\newtheorem{corollary}[theorem]{Corollary}
\newtheorem{lemma}[theorem]{Lemma}
\newenvironment{proof}[1][Proof]{\noindent\textbf{#1.} }{\ \rule{0.5em}{0.5em}}
\begin{document}

\title{Random walk with heavy tail and negative drift conditioned by its
minimum and final values}
\author{Vincent Bansaye\thanks{%
CMAP, Ecole Polytechnique, Route de Saclay, 91128 Palaiseau Cedex, France;
e-mail: bansaye@polytechnique.edu}\, and Vladimir Vatutin\thanks{%
Department of Discrete Mathematics, Steklov Mathematical Institute, 8,
Gubkin str., 119991, Moscow, Russia; e-mail: vatutin@mi.ras.ru}, }
\maketitle

\begin{abstract}
We consider random walks with finite second moment which drifts to $-\infty$
and have heavy tail. We focus on the events when the minimum and the final
value of this walk belong to some compact set. We first specify the
associated probability. Then, conditionally on such an event, we finely
describe the trajectory of the random walk. It yields a decomposition
theorem with respect to a random time giving a big jump whose distribution
can be described explicitly.
\end{abstract}

\section{Introduction and main results}

We consider a random walk $\mathbf{S}=(S_{n}:n\geq 0)$ generated by a
sequence $(X_{n}:n\geq 1)$ of i.i.d. random variables distributed as a
random variable $X$. Thus,
\begin{equation*}
S_{n}=\sum_{i=1}^{n}X_{i},\qquad (S_{0}=0).
\end{equation*}

We assume that the random walk has a negative drift
\begin{equation}
\mathbf{E}\left[ X\right] =-a<0.  \label{SubSubcritica}
\end{equation}%
and a heavy tail
\begin{equation}
A(x)=\mathbf{P}\left( X>x\right) =\frac{l(x)}{x^{\beta }},  \label{Tail0}
\end{equation}%
where $\beta >2$ and $l(x)$ is a function slowly varying at infinity. Thus,
the random variable $X$ under the measure $\mathbf{P}$ does not satisfy the
Cramer condition and has finite variance. We further suppose that, for any
fixed $\Delta >0$,
\begin{equation*}
x\left[ \frac{l(x+\Delta )}{l(x)}-1\right] \overset{x\rightarrow \infty }{%
\longrightarrow }0
\end{equation*}%
which is equivalent to
\begin{equation}
\mathbf{P}(X\in (x,x+\Delta ])=\frac{\Delta \beta \mathbf{P}(X>x)}{x}%
(1+o(1))=\frac{\Delta \beta A(x)}{x}(1+o(1))  \label{remainder}
\end{equation}%
as $x\rightarrow \infty $. \newline

To formulate the results of the present paper we introduce two important
random variables
\begin{equation*}
M_{n}\ =\ \max (S_{1},\ldots ,S_{n})\ ,\quad L_{n}\ =\ \min (S_{1},\ldots
,S_{n})
\end{equation*}%
and two right-continuous functions $U:\mathbb{R}\rightarrow \mathbb{R}%
_{0}=\left\{ x\geq 0\right\} $ and $V:\mathbb{R}\rightarrow \mathbb{R}_{0}$
given by
\begin{align*}
U(x)\ & =\ 1+\sum_{k=1}^{\infty }\mathbf{P}\left( -S_{k}\leq
x,M_{k}<0\right) \ ,\quad x\geq 0, \\
V(x)\ & =\ 1+\sum_{k=1}^{\infty }\mathbf{P}\left( -S_{k}>x,L_{k}\geq
0\right) \ ,\quad x\leq 0,\
\end{align*}%
and $0$ elsewhere. In particular $U(0)=V(0)=1$. It is well-known that $%
U(x)=O(x)$ for $x\rightarrow \infty $. Moreover, $V(-x)$ is uniformly
bounded in \thinspace $x$ in view of $\mathbf{E}X<0$.

It will be convenient to write $\mathbf{1}_{n}$ for the $n-$dimensional
vector whose coordinates are all equal to $1$ and set $\mathbf{S}%
_{j,n}=(S_{j},S_{j+1},\cdots ,S_{n})$ if $j\leq n$ with $\mathbf{S}_{n}=%
\mathbf{S}_{0,n}$ and $\mathbf{S}_{n,0}=(S_{n},S_{n-1},\cdots ,S_{0}).$
Similar notation will be used for nonrandom vectors. Say, $\mathbf{s}%
_{n,0}=(s_{n},s_{n-1},\cdots ,s_{0})$. Let%
\begin{equation*}
b_{n}=\beta \frac{\mathbf{P}\left( X>an\right) }{an}.
\end{equation*}

With this notation in hands, we first describe the asymptotic behavior of
the probability of the event that the random walk remains within the time
interval $[0,n]$ above some level $-x$ and ends up at time $n$ below the
level $T$.

\begin{theorem}
\label{C_lNonzero}For any $x\geq 0$ and $T>-x$, as $n\rightarrow \infty $,%
\begin{equation}
\mathbf{P}\left( S_{n}<T,L_{n}\geq -x\right) \sim \
b_{n}U(x)\int_{0}^{x+T}V(-z)\,dz  \label{NonZeroL}
\end{equation}%
and for any $x\geq 0$ and $T<x$, as $n\rightarrow \infty $,%
\begin{equation}
\mathbf{P}\left( S_{n}>T,M_{n}<x\right) \ \sim \
b_{n}V(-x)\int_{0}^{x-T}U(z)\,dz\ .  \label{NonzeroM}
\end{equation}
\end{theorem}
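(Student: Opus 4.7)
My plan is to use the \emph{one big jump} principle: under (\ref{Tail0}) and (\ref{SubSubcritica}), the atypical event $\{S_n<T,\,L_n\ge -x\}$ is realized (up to $o(b_n)$) by the walk having exactly one large positive increment $X_\nu$ of size of order $an$. Because the pre-jump piece must stay above $-x$ despite drift $-a$, the dominant contribution in (\ref{NonZeroL}) comes from $\nu=O(1)$; by a time-reversal argument, the dominant contribution in (\ref{NonzeroM}) comes from $\nu$ within $O(1)$ of $n$. I will describe the proof of (\ref{NonZeroL}); the statement (\ref{NonzeroM}) then follows by an analogous argument with the roles of $U$ and $V$ swapped.

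First I would fix a truncation $\theta_n \to \infty$ with $\theta_n = o(n)$ and use Nagaev-type inequalities together with (\ref{remainder}) to reduce to the case where exactly one increment exceeds $\theta_n$. Conditioning on the index $\nu = k$ of this big jump and applying the Markov property at times $k-1, k$ gives
\begin{equation*}
\mathbf{P}(S_n<T,\,L_n\ge -x,\,\nu=k) \;=\; \iint p_k(dy_1)\,\mathbf{P}(X\in dy_2)\,r_{n-k}(y_1+y_2),
\end{equation*}
where $p_k(dy_1) = \mathbf{P}(S_{k-1}\in dy_1,\,L_{k-1}\ge -x,\,\max_{j<k}X_j \le \theta_n)$ and $r_m(z) = \mathbf{P}(z+S_m<T,\,\min_{j\le m}(z+S_j)\ge -x)$. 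Three asymptotic ingredients then combine: (i) for the pre-jump factor, Cramer-type estimates applied to the truncated walk yield exponential decay of $p_k$ in $k$, so summing over $k$ and letting $\theta_n\to\infty$ produces (via the series definition of $U$) the factor $U(x)$ times a density in $y_1$; (ii) for the big-jump factor, (\ref{remainder}) gives $\mathbf{P}(X\in dy_2)\sim b_n\,dy_2$ uniformly for $y_2\approx an$; (iii) for the post-jump factor, time-reversing the walk (which starts at $y_1+y_2\approx an$ and must end at some $s\in(-x,T)$ while staying above $-x$) turns the constraint into an upper bound on the maximum of a reversed walk started at $s$, and the renewal theorem for the ascending ladder heights yields the factor $V(-s-x)$. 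Integrating over $s\in(-x,T)$ with the substitution $z=s+x$ produces $\int_0^{x+T}V(-z)\,dz$, and multiplying the three factors gives (\ref{NonZeroL}).

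The main obstacle will be making these asymptotic expansions uniform enough to combine cleanly, in particular on boundary regimes (small $k$, $y_1$ or $s$ near $-x$, $s$ near $T$). Identifying the post-jump contribution with $V(-s-x)$ in particular rests on a local limit theorem for the walk with negative drift conditioned to stay above $-x$ and to have a prescribed terminal height of order $an$; this in turn rests on Wiener--Hopf and ladder-height renewal theory, and in our heavy-tailed setting requires uniform control of the slowly varying function $l$ in (\ref{Tail0}) across all relevant scales. Once this technical backbone is in place, (\ref{NonzeroM}) follows by reversing time and exploiting the duality that swaps $U$ and $V$ (with the big jump now near time $n$).
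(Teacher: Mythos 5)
Your route is genuinely different from the paper's. You propose a direct probabilistic ``one big jump'' decomposition: reduce to exactly one increment exceeding a truncation level, condition on its index $k$, and identify three factors (pre-jump staying above $-x$, jump density $\approx b_n\,dy_2$ near $an$, post-jump part handled by time reversal and ladder/renewal theory). The paper instead never localizes the jump for Theorem \ref{C_lNonzero}: it proves Laplace-transform asymptotics $\mathbf{E}[e^{-\lambda S_n};L_n\geq 0]\sim K_2(\lambda)b_n$ via the local limit theorem for the \emph{unconstrained} shifted sum (Lemma \ref{Tlocal}), the Baxter identity, and the Chover--Ney--Wainger result (Lemma \ref{le1}), then extends to a starting point $x$ by a duality decomposition (Lemma \ref{pr1}) and concludes by the continuity theorem for Laplace transforms. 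The jump-localization machinery you describe is essentially what the paper develops later (Lemmas \ref{L_jump}, \ref{L_jumpBegin}, \ref{L_replace} and STEP 2 of the proof of Theorem \ref{thdecomp}) to prove the path decomposition, so your plan is viable in principle, but it front-loads the harder work; the paper's analytic route gets the constant $U(x)\int_0^{x+T}V(-z)dz$ with much less uniformity to check.

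Two concrete problems in your sketch. First, the claim that ``Cramer-type estimates applied to the truncated walk yield exponential decay of $p_k$ in $k$'' is false in this setting: the Cram\'er condition fails by assumption, and $\mathbf{P}(L_{k-1}\geq -x)\sim U(x)e^{D}\mathbf{P}(X>ak)$ decays only polynomially, of order $k^{-\beta}$ (Lemma \ref{Ttail}); truncation at $\theta_n\to\infty$ does not change this for $k=o(\theta_n)$. What saves the argument is merely summability of $\sum_k\mathbf{P}(L_{k-1}\geq -x)$ (this is exactly how Lemma \ref{L_jumpBegin} confines the jump to the beginning), so the conclusion you want survives, but the stated justification would not. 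Second, the factor bookkeeping is not yet a proof: the pre-jump part produces $\sum_{k\geq 0}\mathbf{P}(L_k\geq -x)=\mathbf{E}[T_{-x}]$ (with $T_{-x}$ the first passage below $-x$), not $U(x)$ itself --- the series defining $U$ involves the events $\{M_k<0\}$ --- and the post-jump, time-reversed part produces $\mathbf{P}_s(L'_\infty\geq -x)$, which equals $V(-(s+x))$ only up to a normalizing constant ($\mathbf{E}[\tau]$, equivalently the paper's $e^{D}$). You need the ladder-epoch identities making these two constants cancel exactly; your sketch does not track them. Finally, the ``conditioned local limit theorem'' you flag as the main obstacle is avoidable within your own scheme: integrate over the jump size first, using (\ref{remainder}) to replace $\mathbf{P}(X\in dy_2)$ by $b_n\,dy_2$ uniformly on $[an-M\sqrt n,an+M\sqrt n]$, and then the duality $ds_0\,\mathbf{P}_{s_0}(\mathbf{S}_m\in d\mathbf{s}_m)=ds_m\,\mathbf{P}_{s_m}(\mathbf{S}'_{m,0}\in d\mathbf{s}_{m,0})$ reduces the post-jump factor to $\mathbf{P}_s(L'_m\geq -x)\to\mathbf{P}_s(L'_\infty\geq -x)$, with the $M\to\infty$ error controlled by the central limit theorem as in Lemma \ref{L_replace}; no local limit theorem for the conditioned walk is required.
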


Our second goal is to demonstrate that if the event $\left\{
S_{n}<T,L_{n}\geq -x\right\} $ occurs then the trajectory of the random walk
on \thinspace $\lbrack 0,n]$ has a big jump of the order $an+O\left( \sqrt{n}%
\right) $, such a jump is unique and happens at the beginning of the
trajectory. Using this fact we also describe the full trajectory of the
random walk.

\begin{theorem}
\label{thdecomp} For all $x>0$ and $T\in \mathbb{R}$ there exists a sequence
of numbers $\pi _{j}=\pi _{j}(x)>0,\ \sum_{j\geq 0}\pi _{j}=1$, such that
for each $j$ the following properties hold:

(i) $\lim_{n\rightarrow \infty }\mathbf{P}\left( X_{j}\geq an/2|L_{n}\geq
-x,\ S_{n}\leq T\right) =\pi _{j};$

(ii) For each measurable and bounded function $F:\mathbb{R}^{j}\rightarrow
\mathbb{R}$ and each family of measurable uniformly bounded functions $F_{n}:%
\mathbb{R}^{n+1}\rightarrow \mathbb{R}$ such that
\begin{equation}
\lim_{\varepsilon \rightarrow 0}\sup_{n\in \mathbb{N},\mathbf{s}_{n}\in
\mathbb{R}^{n+1}}|F_{n}(\mathbf{s}_{n}+\epsilon \mathbf{1}_{n+1})-F_{n}(%
\mathbf{s}_{n})|=0,  \label{Ucont}
\end{equation}%
we have as $n\rightarrow \infty $
\begin{eqnarray*}
&&\mathbf{E}\left[ F(\mathbf{S}_{j-1})F_{n-j}(\mathbf{S}_{j,n})|L_{n}\geq
-x,\ S_{n}\leq T,\ X_{j}\geq an/2\right] \\
&&\quad -\mathbf{E}\left[ F(\mathbf{S}_{j-1})|L_{j-1}\geq -x\right] \mathbf{E%
}_{\mu }\left[ F_{n-j}(\mathbf{S}_{n-j,0}^{\prime })|L_{\infty }^{\prime
}\geq -x\right] \rightarrow 0,
\end{eqnarray*}%
where $\mathbf{S}^{\prime }$ is a random walk with step $-X$ and positive
drift, $L_{\infty }^{\prime }$ is its global minimum and $\mu $ is a
probability measure given by :
\begin{equation}
\mu (dy)=1_{y\in \lbrack -x,T]}\theta ^{-1}\mathbf{P}_{y}(L_{\infty
}^{\prime }\geq -x)dy,\quad \theta =\int_{-x}^{T}dy\mathbf{P}_{y}(L_{\infty
}^{\prime }\geq -x).  \label{DefMu}
\end{equation}
\end{theorem}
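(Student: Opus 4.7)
\textit{Step 1 (single-jump asymptotics).} My plan is to locate a unique big jump at some time $j$ via the Markov property at $j$, and to identify the post-jump trajectory, by time reversal, with a walk of positive drift $+a$ and step distribution $-X$. Fixing $j\geq 1$ and conditioning on $\mathbf{S}_{j-1}$ gives
\begin{equation*}
\mathbf{P}\bigl(X_j\geq an/2,\,L_n\geq -x,\,S_n\leq T\bigr)
=\mathbf{E}\bigl[\mathbf{1}_{L_{j-1}\geq -x}\,\psi_{n,j}(S_{j-1})\bigr],
\end{equation*}
where $\psi_{n,j}(s)$ is the probability that a fresh walk starting at $s$ has first step $\geq an/2$ and then, over $n-j$ further steps, stays $\geq -x$ and ends $\leq T$. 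For the post-jump factor I apply the time reversal $S_m^{\prime}:=S_{n-m}$, whose increments are i.i.d.\ of law $-X$, with drift $+a$ and $S_0^{\prime}=S_n=:y\in[-x,T]$, and which preserves the minimum. Because $S^{\prime}$ drifts to $+\infty$, the event $\{L_{n-j}^{S^{\prime}}\geq -x\}$ differs from $\{L_\infty^{\prime}\geq -x\}$ only by an exponentially small error. Combining a local limit theorem for $S_{n-j}^{\prime}$ (Gaussian around $a(n-j)$ on scale $\sqrt n$) with the density $\sim b_n$ of $X_j$ on $O(\sqrt n)$-windows near $an$ from \eqref{remainder}, and integrating, yields $\psi_{n,j}(s)\sim b_n\theta$ uniformly in $s$, and therefore
\begin{equation*}
\mathbf{P}\bigl(X_j\geq an/2,\,L_n\geq -x,\,S_n\leq T\bigr)\ \sim\ b_n\,\theta\,\mathbf{P}(L_{j-1}\geq -x).
\end{equation*}

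\textit{Step 2 (identification of $\pi_j$).} Dividing by Theorem~\ref{C_lNonzero} reads off
\begin{equation*}
\pi_j(x)=\frac{\theta\,\mathbf{P}(L_{j-1}\geq -x)}{U(x)\int_0^{x+T}V(-z)\,dz}>0.
\end{equation*}
I then verify that this ratio is $T$-independent and that $\sum_j\pi_j=1$; both rest on standard fluctuation-theoretic identities, specifically the duality $\mathbf{P}_0(L_\infty^{\prime}\geq -q)\propto V(-q)$ and the renewal representation of $U(x)$ via descending ladder heights. Alternatively, $\sum_j\pi_j=1$ follows from a truncation argument showing that the event occurs together with ``no step $\geq an/2$'' only with probability $o(b_n)$ (by Cram\'er applied to the truncated walk, whose drift is still strictly negative), while two or more jumps of size $\geq an/2$ contribute $O(b_n^2)=o(b_n)$.

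\textit{Step 3 (functional decomposition).} Under the Markov splitting of Step~1, the pre-jump piece is independent of the jump and of the post-jump piece, and its conditional law given $\{L_{j-1}\geq -x\}$ is exactly that of the walk of length $j-1$ conditioned on staying $\geq -x$; this yields the factor $\mathbf{E}[F(\mathbf{S}_{j-1})\mid L_{j-1}\geq -x]$. For the post-jump factor, the reversal $S_m^{\prime}=S_{n-m}$ identifies $\mathbf{S}_{j,n}$ with $\mathbf{S}_{n-j,0}^{\prime}$, where $S^{\prime}$ has step $-X$ and $S_0^{\prime}=S_n$. The analysis of Step~1 further shows that, conditional on the event, $S_n$ has asymptotic density proportional to $\mathbf{P}_y(L_\infty^{\prime}\geq -x)\mathbf{1}_{y\in[-x,T]}$, which is the measure $\mu$ of \eqref{DefMu}. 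The finite-horizon survival condition becomes asymptotically $\{L_\infty^{\prime}\geq -x\}$, and the uniform continuity assumption \eqref{Ucont} absorbs the $o(1)$ deterministic shifts introduced by passing to the infinite-horizon event and by localising $S_n$ via the local limit theorem.

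\textit{Main obstacle.} The bulk of the technical work lies in a uniform local limit theorem for $S^{\prime}$ conditioned on survival $\{L_\infty^{\prime}\geq -x-y\}$, holding uniformly for $y\in[-x,T]$ and locally uniformly in the centered terminal value. A secondary but also delicate point is controlling, uniformly in the conditioning event, the probability of the alternative scenario in which no single step exceeds $an/2$ but several medium-sized positive jumps combine to offset the drift; this truncation argument is standard for $\beta>2$ but requires care because of the conditioning on both the minimum and the endpoint.
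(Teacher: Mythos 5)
Your proposal is correct and follows essentially the same route as the paper: a single big jump of size $an+O(\sqrt{n})$ occurring at a finite time, the Markov property at the jump combined with the local tail condition (\ref{remainder}), time reversal of the post-jump path into the positive-drift walk $\mathbf{S}^{\prime}$ with entrance law $\mu$, and identification of $\pi_j\propto\theta\,\mathbf{P}(L_{j-1}\geq -x)$ by taking $F=F_{n}=1$ and dividing by Theorem~\ref{C_lNonzero}. The only notable difference is technical: where you flag a local limit theorem for $\mathbf{S}^{\prime}$ conditioned on survival as the main obstacle, the paper sidesteps it by proving uniform equicontinuity and boundedness of $g_{n}(s)=\sqrt{n}\,\mathbf{E}_{s}[F_{n}(\mathbf{S}_{n});L_{n}\geq -x,S_{n}\leq T]$ (Lemma~\ref{unif}, via the Stone local limit theorem for the unconditioned walk) and then using the positive drift to decouple the infinite-horizon survival event from the CLT-scale location of the endpoint, with dominated convergence over the law of $S_{j-1}$ replacing your claimed uniformity in $s$.
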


In words, this theorem yields the decomposition of the trajectory of $%
(S_{i}:i\leq n)$ conditioned by its minimum $L_{n}$ and final value $S_{n}$.
It says that conditionally on $L_{n}\geq -x$ and $S_{n}=s$, $\mathbf{S}$
jumps with probability $\pi _{j}$ at some (finite) time $j$. Before this
time, $\mathbf{S}$ is simply conditioned to be larger than $-x$. After this
time, reversing the trajectory yields a random walk $\mathbf{S}^{\prime }$
(with positive drift) conditioned to be larger than $-x$. The size of the
jump at time $j$ links the value $S_{j-1}$ to $S_{n-j-1}^{\prime
}=s+a(n-j-1)+\sqrt{n}W_{n}$, where, thanks to the central limit theorem, $%
W_{n}$ converges in distribution, as $n\rightarrow \infty $ to a Gaussian
random variable. Thus this big jumps is of order $an+W_{n}\sqrt{n}$, as
stated below. The proof is differed to Section \ref{preuve}. \newline

\begin{corollary}
Let $\varkappa =\inf \{j\geq 1:X_{j}\geq an/2\}$. Under $\mathbf{P}$,
conditionally on $L_{n}\geq -x$ and $S_{n}\leq T$, $\varkappa $ converges in
distribution to a proper random variable whose distribution $(\pi _{j}:j\geq
1)$ is specified by
\begin{equation*}
\pi _{j}=\pi _{j}(x)=\frac{\mathbf{P}(L_{j}\geq x)}{\sum_{k\geq 0}\mathbf{P}%
(L_{k}\geq x)}
\end{equation*}%
and
\begin{equation*}
\frac{X_{\varkappa }-an}{\sqrt{n}}
\end{equation*}%
converges in distribution to a centered Gaussian law with variance $\sigma
^{2}=Var(X)$.
\end{corollary}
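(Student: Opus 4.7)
The plan is to derive both statements from Theorem~\ref{thdecomp}, supplemented by tightness arguments and the classical central limit theorem.

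First I would identify $\varkappa$ with the (asymptotically unique) index of a single big jump. Since $\sum_j \pi_j = 1$ by Theorem~\ref{thdecomp}(i), the conditional probability that at least one $X_j$ exceeds $an/2$ tends to $1$. On the other hand, the probability that two coordinates exceed $an/2$ is bounded above by $n^2 A(an/2)^2$, which is negligible compared with $\mathbf{P}(L_n \geq -x,\, S_n \leq T) \asymp b_n$ supplied by Theorem~\ref{C_lNonzero}. Hence the events $\{\varkappa = j\}$ and $\{X_j \geq an/2\}$ coincide up to a conditionally null set, so part~(i) of Theorem~\ref{thdecomp} gives the convergence in distribution $\mathbf{P}(\varkappa = j \mid L_n \geq -x,\, S_n \leq T) \to \pi_j$. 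To identify $\pi_j$ as the renewal-type ratio stated in the corollary, I would invoke part~(ii) of Theorem~\ref{thdecomp} with $F \equiv 1$ and $F_{n-j} \equiv 1$: the resulting factorization decouples a pre-jump factor equal to $\mathbf{P}(L_{j-1} \geq -x)$ from a post-jump factor that, after reversing time, depends on $j$ only through $n-j \sim n$ and is therefore asymptotically independent of $j$. Normalizing in $j$ yields the stated formula.

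For the Gaussian limit, I would condition on $\{\varkappa = j\}$ with $j$ fixed and decompose
\[
X_j - an = (S_{n-j}' - S_0') - an + S_n - S_{j-1},
\]
where, by Theorem~\ref{thdecomp}(ii), the reversed trajectory $S_k' = S_{n-k}$ for $0 \leq k \leq n-j$ is asymptotically distributed as a random walk with step $-X$, mean $+a$ and variance $\sigma^2$, started from $S_0' \in [-x,T]$ under $\mu$ and conditioned on $L_\infty' \geq -x$. The classical CLT for $\mathbf{S}'$ gives $S_{n-j}' - S_0' = a(n-j) + \sigma\sqrt{n-j}\, Z_n$ with $Z_n \Rightarrow \mathcal{N}(0,1)$, while the remaining terms reduce to $-aj + S_n - S_{j-1}$, which is $O_{\mathbf{P}}(1)$ because $j$, $S_n \in [-x,T]$, and $S_{j-1}$ are all tight (the latter since under the limiting law $\mathbf{S}_{j-1}$ is a walk of finite length conditioned to stay above $-x$). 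Using $(n-j)/n \to 1$ and Slutsky's lemma, this yields $(X_\varkappa - an)/\sqrt{n} \Rightarrow \sigma Z \sim \mathcal{N}(0, \sigma^2)$.

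The main obstacle is to verify that the ordinary CLT for $\mathbf{S}'$ survives the conditioning $\{L_\infty' \geq -x\}$ on the positive-drift walk. Because $\mathbf{E}[-X] = a > 0$, the function $y \mapsto \mathbf{P}_y(L_\infty' \geq -x)$ is strictly positive and bounded on bounded subsets of $(-x, \infty)$, so the conditioning is a bounded reweighting of the unconditional law on any finite horizon; a truncation argument together with the continuity of the limiting Gaussian density then transfers the CLT through. The rest is routine bookkeeping.
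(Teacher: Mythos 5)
Your Gaussian half is essentially the paper's own argument and is fine as a sketch: the paper applies Theorem \ref{thdecomp}(ii) with $F\equiv 1$ and $F_n(s_1,\dots,s_{n+1})=g((s_1-an)/\sqrt{n})$ for $g$ uniformly continuous and bounded (this choice satisfies (\ref{Ucont})), then uses the positivity of the drift of $\mathbf{S}'$ to discard the conditioning $L_\infty'\geq -x$ and invokes the CLT; your decomposition $X_j-an=(S'_{n-j}-S'_0)-an+S_n-S_{j-1}$, the tightness of $S_{j-1}$ and $S'_0\in[-x,T]$, and your truncation argument for the conditioning amount to the same route, and only need to be packaged through an admissible test function as above.

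The first half, however, has a genuine gap. Part (ii) of Theorem \ref{thdecomp} compares two \emph{conditional} (normalized) expectations, so taking $F\equiv 1$ and $F_{n-j}\equiv 1$ makes both terms equal to $1$ and the statement vacuous; it does not produce any ``pre-jump factor $\mathbf{P}(L_{j-1}\geq -x)$''. That factor only appears in the \emph{unnormalized} limit $b_n^{-1}\mathbf{P}(L_n\geq -x,\ S_n\leq T,\ X_j\geq an/2)\rightarrow \mathbf{P}(L_{j-1}\geq -x)\,\theta$, which is precisely equation (\ref{pi}) established in STEP 4 of the proof of Theorem \ref{thdecomp}; the paper's proof of the corollary quotes it and divides by the asymptotics (\ref{NonZeroL}) of Theorem \ref{C_lNonzero} to get $\pi_j\propto\mathbf{P}(L_{j-1}\geq -x)$. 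From the statements of Theorems \ref{C_lNonzero} and \ref{thdecomp} alone one only knows that the limits $\pi_j$ exist and sum to one; your derivation of the explicit ratio therefore does not go through as written, and you would have to either cite (\ref{pi}) or redo that computation. A secondary error: the bound ``two coordinates exceed $an/2$ with probability at most $n^2A(an/2)^2=o(b_n)$'' is false for $2<\beta\leq 3$, which the paper allows, since $n^2A(an/2)^2\asymp n^{2-2\beta}l^2(n)$ while $b_n\asymp n^{-\beta-1}l(n)$, giving a ratio of order $n^{3-\beta}l(n)$. The identification of $\{\varkappa=j\}$ with $\{X_j\geq an/2\}$ is still salvageable, because for fixed $j$ only the finitely many pairs $i<j$ are involved (each of probability $A(an/2)^2=o(b_n)$), or via Lemma \ref{L_jumpBegin}, which first eliminates big jumps occurring after a finite time $J$ and only then bounds pairs within $[1,J]$; but the union bound over all $n^2$ pairs, as you state it, does not work under the paper's hypotheses.
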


\begin{proof}
The expression of $\pi _{j}$ can be found in STEP 4 of the proof of  Theorem~%
\ref{thdecomp}, see (\ref{pi}). The second part of the corollary is an
application of the second part of the mentioned theorem with
\begin{equation*}
F(s_{0},\cdots ,s_{j})=1,\qquad F_{n}(s_{1},...,s_{n+1})=g((s_{1}-an)/\sqrt{n%
})
\end{equation*}%
for $g$ uniformly continuous and bounded if one takes into account the
positivity of the drift of $\mathbf{S}^{\prime }$ allowing to neglect the
condition $L_{\infty }^{\prime }\geq -x$ and to use the central limit
theorem.
\end{proof}
$\newline$

We note that random walks with negative (or positive) drift satisfying
conditions (\ref{SubSubcritica}) and (\ref{Tail0}) (and even weaker
assumptions) have been investigated by many authors (see, for instance, 
\cite{BD94, Durr, Hir98, Kor01} and monograph \cite{BB2005}
with references therein. Article \cite{Durr} is the most close to the
subject of the present paper. Durrett has obtained there scaled limit
results for the random walk meeting conditions (\ref{SubSubcritica}) and (%
\ref{Tail0}) but conditionally on the minimum value only. He has shown that,
for each $M\geq a$ the size of the big jump may exceed the value $Mn$ with a
positive probability. The additional condition on the final value we impose
in Theorem \ref{C_lNonzero} modifies the size of the big jump by forcing it
to be concentrated in a vicinity of point $an$ with deviation of order $%
\sqrt{n}$ and allows us to provide in Theorem \ref{thdecomp} a non scaled
decomposition of the asymptotic conditional path. We stress that when the
increments of the random walk are in the domain of attraction of a Gaussian
law with zero mean, such problems have been investigated. See, in
particular, \cite{Sohier} for the convergence of the scaled random walk to
the Brownian excursion.

The initial motivations to get the results of the present paper come from
branching processes in random environment (BPRE). The survival probability
of such processes are deeply linked to the behavior of the random walk
associated with the successive $\log $ mean offspring \cite{gkv, agkv,
agkv2, abkv1, abkv2, bgk, vz}, namely $\log m$. The fine results given here
are required to get the asymptotic survival probability of the subcritical
class of BPRE such that this $\log m$ has density
\begin{equation}
p\left( x\right) =\frac{l_{0}(x)}{x^{\beta +1}}e^{-\rho x},
\label{remainder0}
\end{equation}%
where $l_{0}(x)$ is a function slowly varying at infinity, $\beta >2,$ $\rho
\in (0,1)$. We refer to \cite{VVII} for precise statements and proofs.

\section{Preliminaries : Some classical results on random walks}

\indent Our arguments essentially use a number of statements from the theory
of random walks, that are included into this section. \newline
In the sequel we shall meet the situations in which the random walk starts
from any point $x\in \mathbb{R}.$ In such cases we write for probabilities
as usual $\mathbf{P}_{x}\left( \cdot \right) .$ We use for brevity $\mathbf{P%
}$ instead of $\mathbf{P}_{0}$. \newline

We define
\begin{equation*}
\quad ~\tau _{n}=\min \left\{ 0\leq k\leq n:S_{k}=\min (0,L_{n})\right\} ,\
\tau =\min \left\{ k>0:\ S_{k}<0\right\}
\end{equation*}%
and let
\begin{equation*}
D=\sum_{k=1}^{\infty }\frac{1}{k}\mathbf{P}\left( S_{k}\geq 0\right) .
\end{equation*}%
Now we list some known statements for the convenience of references. The
first lemma is directly taken from \cite{BB2005}, Theorems 8.2.4, page~376
and 8.2.18, page~389. \newline

\begin{lemma}
\label{Ttail} Under conditions (\ref{SubSubcritica}) and (\ref{Tail0}), as $%
n\rightarrow \infty $%
\begin{equation}
\mathbf{P}\left( L_{n}\geq 0\right) =\mathbf{P}\left( \tau >n\right) \sim
e^{D}\mathbf{P}\left( X>an\right)  \label{AsL}
\end{equation}%
and for any fixed $x>0$%
\begin{equation}
\lim_{n\rightarrow \infty }\frac{\mathbf{P}\left( L_{n}\geq -x\right) }{%
\mathbf{P}\left( \tau >n\right) }=U(x).  \label{Ratio}
\end{equation}
\end{lemma}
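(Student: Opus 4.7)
The two assertions in Lemma~\ref{Ttail} warrant separate treatments. The identity $\mathbf{P}(L_n \geq 0) = \mathbf{P}(\tau > n)$ is immediate from the definitions, so the substantive content lies in (\ref{AsL}) and (\ref{Ratio}).

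For (\ref{AsL}), the governing principle is the single big-jump heuristic: with negative drift and heavy-tailed increments, the event $\{L_n \geq 0\}$ is realized essentially only via one atypically large jump near the start. I would fix $\delta \in (0,1)$ and let $\sigma = \min\{k \geq 1 : X_k > (1-\delta)an\}$. First, I would show that $\mathbf{P}(L_n \geq 0, \sigma > n) = o(\mathbf{P}(X > an))$ via a Fuk--Nagaev or Cram\'er-type exponential bound on the walk with increments truncated at $(1-\delta)an$. Second, on $\{\sigma = k\}$, I would write
\begin{equation*}
\mathbf{P}(L_n \geq 0, \sigma = k) = \mathbf{E}\bigl[\mathbf{1}\{L_{k-1} \geq 0,\ \sigma > k-1\}\, H_{n,k}(S_{k-1})\bigr],
\end{equation*}
where $H_{n,k}(s)$ is the probability that $X > (1-\delta)an$ and that a fresh walk starting from $s + X$ stays nonnegative for $n-k$ steps. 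Invoking (\ref{remainder}) to factor out $\mathbf{P}(X > an)$ and noting that, once lifted to a height of order $an$, the drift-$(-a)$ walk terminates near $\sim ak > 0$ and remains positive with probability tending to~$1$, one arrives at $\mathbf{P}(L_n \geq 0, \sigma = k) \sim \mathbf{P}(L_{k-1} \geq 0,\sigma > k-1)\cdot\mathbf{P}(X > an)$. Summing on~$k$ and letting $\delta \downarrow 0$ yields $\mathbf{P}(L_n \geq 0) \sim C\cdot\mathbf{P}(X > an)$ with $C = \sum_{k \geq 0} \mathbf{P}(L_k \geq 0)$; the Spitzer--Sparre Andersen identity
\begin{equation*}
\sum_{k \geq 0}\mathbf{P}(L_k \geq 0)\, s^k = \exp\Bigl(\sum_{k \geq 1} \tfrac{s^k}{k}\mathbf{P}(S_k \geq 0)\Bigr)
\end{equation*}
evaluated Abelianly at $s \uparrow 1$ then identifies $C$ with $e^D$, the series~$D$ being convergent since $\mathbf{P}(S_k \geq 0) = O(k^{1-\beta}l(k))$ under the present hypotheses.

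For (\ref{Ratio}), I would decompose according to $\tau_n$, the first time on $[0,n]$ at which the walk attains its minimum. Applying the Markov property to the post-minimum fragment and time-reversal (replacing $X_i$ by $X_{\tau_n+1-i}$) to the pre-minimum fragment yields the convolution representation
\begin{equation*}
\mathbf{P}(L_n \geq -x) = \sum_{k=0}^n \mathbf{P}(-S_k \leq x,\ M_k < 0)\cdot \mathbf{P}(\tau > n-k).
\end{equation*}
Dividing by $\mathbf{P}(\tau > n)$ and invoking the regular variation of $n \mapsto \mathbf{P}(\tau > n)$ inherited from (\ref{AsL})--(\ref{Tail0}) gives $\mathbf{P}(\tau > n-k)/\mathbf{P}(\tau > n) \to 1$ for each fixed $k$; splitting the sum at $k = n/2$ and using $U(x) < \infty$ to dominate the low-$k$ part, while exploiting $\sum_{k > n/2}\mathbf{P}(-S_k \leq x, M_k < 0) \to 0$ for the high-$k$ tail, dominated convergence delivers the claimed limit $U(x)$.

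The main technical obstacle is the big-jump decomposition underpinning (\ref{AsL}): controlling the truncated event $\{\sigma > n\}$, propagating the uniform estimate (\ref{remainder}) through the sum on~$k$, and ensuring that the contribution of large $k$ is negligible uniformly in~$n$. This delicate bookkeeping is precisely what is carried out in Borovkov--Borovkov, which is why the authors import Theorems~8.2.4 and~8.2.18 of that monograph directly rather than reprove them.
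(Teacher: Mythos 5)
You should first note that the paper does not prove Lemma \ref{Ttail} at all: it is imported verbatim from Borovkov--Borovkov (Theorems 8.2.4 and 8.2.18), so the question is whether your sketch would genuinely close. For (\ref{AsL}) your skeleton is the standard single--big--jump argument and is essentially sound: the decomposition over the first large increment, the limit constant $\sum_{k\geq 0}\mathbf{P}(\tau>k)$, and its identification with $e^{D}$ via the Sparre Andersen identity at $s\uparrow 1$ are all correct. Two caveats, though. First, truncating at $(1-\delta)an$ with $\delta$ small is too weak for the ``no big jump'' step: there is no Cram\'er bound here, and Fuk--Nagaev-type inequalities give an exponent of roughly the ratio deviation/truncation $\approx 1/(1-\delta)$, barely above $1$, which does not beat $\mathbf{P}(X>an)\asymp n^{-\beta}l(an)$; one must truncate at $\delta an$ with $\delta$ small (large exponent, as in the paper's Lemma \ref{L_jump}) and then kill intermediate jumps separately, using that two jumps of order $n$ cost $O\bigl((nA(\delta an))^{2}\bigr)=o(A(an))$ because $\beta>2$. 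Second, after a jump only known to exceed $(1-\delta)an$ the walk does \emph{not} stay nonnegative with probability tending to one (that probability tends to $0$); the correct statement is that integrating the survival probability against the jump law yields $\sim\mathbf{P}(X>a(n-k))\sim\mathbf{P}(X>an)$, since survival forces the jump to exceed $a(n-k)-O(\sqrt{n})$. Your displayed intermediate asymptotics are right, but the reasoning as written would produce the wrong constant $(1-\delta)^{-\beta}$.

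The genuine gap is in your proof of (\ref{Ratio}). The convolution identity $\mathbf{P}(L_n\geq -x)=\sum_{k=0}^{n}u_k v_{n-k}$ with $u_k=\mathbf{P}(-S_k\leq x,\,M_k<0)$ and $v_m=\mathbf{P}(\tau>m)$ is correct, and the range $k\leq n/2$ is handled properly by regular variation plus dominated convergence. But the high-$k$ range cannot be dismissed by ``$\sum_{k>n/2}u_k\to 0$'': after dividing by $v_n\asymp n^{-\beta}l(an)$ you need $\sum_{k>n/2}u_k v_{n-k}=o(v_n)$, and bounding $v_{n-k}\leq 1$ reduces this to $\sum_{k>n/2}u_k=o(n^{-\beta}l(an))$, which is false --- the true order of $u_k$ is $b_k\asymp k^{-\beta-1}l(ak)$ (this is exactly (\ref{Hirano4}) of Lemma \ref{pr1}), so that tail sum is of the \emph{same} order as $v_n$. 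Worse, with no rate on $u_k$ beyond summability the limit could fail altogether: if $u_k$ were comparable to $v_k$, the convolution would pick up the extra term $(\sum_m v_m)\cdot\lim u_n/v_n$, as Lemma \ref{le1} i) shows. Closing this step requires a quantitative estimate of the type $u_k=o(\mathbf{P}(\tau>k))$, i.e. a big-jump bound with the endpoint confined to a compact set --- precisely the local machinery the paper builds in Lemmas \ref{Tlocal}, \ref{LExponent} and \ref{pr1}, and the substance of Borovkov--Borovkov's Theorem 8.2.18. Deferring exactly this point to the monograph, as your closing paragraph does, leaves the one genuinely delicate step of the lemma unproved.
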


The next statement is an easy corollary of Theorem 4.7.1, page 218 of
monograph \cite{BB2005}.

\begin{lemma}
\label{Tlocal} Let $X$ be a non-lattice random variable with $\mathbf{E}%
\left[ X\right] =-a<0$ whose distribution satisfies condition $(\ref%
{remainder})$. If $\tilde{S}_{n}=X_{1}+\cdots +X_{n}+an,$ then for any $%
\Delta >0$ uniformly in $x\geq n^{2/3}$,
\begin{equation*}
\mathbf{P}\left( \tilde{S}_{n}\in \lbrack x,x+\Delta )\right) =\frac{\Delta
\beta nA(x)}{x}(1+o(1)).
\end{equation*}
\end{lemma}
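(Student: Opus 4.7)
The result is a local large-deviation theorem for the centered walk $\tilde S_n = S_n + an$ in the regime $x \geq n^{2/3}$, and my plan mirrors Theorem~4.7.1 of \cite{BB2005} specialized to this setting. Setting $\tilde X_i = X_i + a$, the tail condition (\ref{remainder}) transfers to $\tilde X$: $\mathbf{P}(\tilde X \in [y, y+\Delta)) \sim \Delta \beta A(y)/y$ as $y \to \infty$, uniformly in $\Delta$ on compact sets. The centered variable $\tilde X$ has finite variance $\sigma^2 = \mathrm{Var}(X)$, satisfies $\mathbf{E}\tilde X = 0$, and is non-lattice, so Stone's local limit theorem is available for partial sums.

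I would decompose $\mathbf{P}(\tilde S_n \in [x, x+\Delta))$ according to the number of ``big'' summands $\tilde X_i > h$ for a threshold $h$ close to $x$. The contribution of no big jumps is estimated, after truncation at $h$ and recentering, by a Fuk--Nagaev type inequality, and is super-polynomially small: since $x \geq n^{2/3}$ the deviation is many standard deviations away on the scale of the bounded increments. The contribution of two or more big jumps is controlled by summing over pairs of indices and applying Stone's local limit theorem to the remaining $n-2$ finite-variance summands, yielding a uniform $C/\sqrt{n}$ factor; the resulting bound $O(n^{3/2} A(h)^2)$, compared with the target $nA(x)/x$, leaves a ratio $\asymp n^{1/2} x A(x)$ which tends to zero for $x \geq n^{2/3}$ precisely because $\beta > 2$.

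The main term, coming from exactly one big jump, is
\[
n \int_h^\infty \mathbf{P}(\tilde X_1 \in dy)\, \mathbf{P}\!\left(\tilde S_{n-1}' \in [x-y, x-y+\Delta)\right),
\]
where $\tilde S_{n-1}'$ is an independent copy of $\tilde S_{n-1}$ (the negligible constraint $\max_{i\geq 2}\tilde X_i \leq h$ is removed at cost $O(nA(x))$). Stone's theorem gives, uniformly in $y$,
\[
\mathbf{P}(\tilde S_{n-1}' \in [x-y, x-y+\Delta)) = \frac{\Delta}{\sigma\sqrt{2\pi(n-1)}}\exp\!\left(-\frac{(x-y)^2}{2(n-1)\sigma^2}\right) + o(n^{-1/2}).
\]
Substituting $y = x + u$, the Gaussian factor concentrates $u$ in a window of width $O(\sqrt{n}\log n)$; on this window the uniform convergence theorem for slowly varying functions lets me replace $\beta A(y)/y = (\beta A(x)/x)(1+o(1))$, outside of it the Gaussian tail is negligible, and the truncation $y > h$ is harmless since $(x-h)/\sqrt{n}\to\infty$. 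The Gaussian density then integrates to unity, producing the claimed $\Delta \beta n A(x)/x$.

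The main obstacle is the uniformity in $x \geq n^{2/3}$. The gap $x/\sqrt{n} \geq n^{1/6}$ is what makes the argument work: it keeps the relevant big-jump window of width $O(\sqrt{n}\log n)$ microscopic on the scale of $x$, so that the regularly varying function $A(y)/y$ behaves locally like the constant $A(x)/x$, and it guarantees that the Stone error $o(n^{-1/2})$, multiplied by the tail density and integrated, remains uniformly negligible against the main order $nA(x)/x$.
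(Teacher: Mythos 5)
The paper offers no proof of Lemma~\ref{Tlocal} at all: it is imported as an easy corollary of Theorem~4.7.1 of \cite{BB2005}. So your attempt is in effect a reconstruction of that external theorem, and your architecture is the standard one behind it: one-big-jump decomposition at a threshold $h$, Stone's local limit theorem for the remaining finite-variance sum, a Fuk--Nagaev bound for the no-big-jump part, and a two-jump term of order $n^{3/2}A(h)^{2}\Delta$ (that last estimate, and its comparison with $nA(x)/x$ via $\sqrt{n}\,xA(x)\to0$ for $x\ge n^{2/3}$, $\beta>2$, is correct). However, two steps do not hold as written, and both sit exactly where the claimed uniformity in $x\ge n^{2/3}$ (with no upper bound on $x$) has to be earned.

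First, the truncation level: with $h$ ``close to $x$'' the Fuk--Nagaev exponent $x/h$ is of order one, and the resulting bound is only a bounded power of $nA(x)$, which for $\beta$ close to $2$ and $x\asymp n^{2/3}$ is not $o(nA(x)/x)$; the event is not ``super-polynomially small''. One needs $h=\delta x$ with $\delta$ small depending on $\beta$, so that the exponent $\asymp 1/\delta$ dominates $\beta+1$, and then only relative smallness with respect to $nA(x)/x$ comes out. Second, and more seriously, the Stone error: Stone's theorem gives an additive error $\epsilon_{n}n^{-1/2}$ with an unspecified rate $\epsilon_{n}\to0$. Integrating it against the jump measure over all of $[h,\infty)$ contributes about $\epsilon_{n}\sqrt{n}\,A(x)$, whose ratio to the target $nA(x)/x$ is $\epsilon_{n}x/\sqrt{n}$ --- not $o(1)$ uniformly in $x\ge n^{2/3}$ since $x$ is unbounded; even restricted to your window of width $\sqrt{n}\log n$ the relative error is $\epsilon_{n}\log n$, which need not vanish. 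Moreover, off the window the phrase ``the Gaussian tail is negligible'' controls only the Gaussian main term, not the additive error, and not the genuine single-jump contributions in which the remaining walk itself deviates by more than $M\sqrt{n}$. The standard repair is to use Stone only on $|y-x|\le M\sqrt{n}$ with $M$ fixed (relative error $\le CM\epsilon_{n}\to0$), to bound the ranges $y\in[\delta x,\,x-M\sqrt{n}]$ and $y\ge x+M\sqrt{n}$ by combining the local bound $\mathbf{P}(X\in[y,y+\Delta))\le C\Delta A(\delta x)/(\delta x)$ with a Chebyshev estimate for $\tilde{S}_{n-1}$ summed over length-$\Delta$ intervals, which gives at most $C_{\delta}M^{-2}\,nA(x)\Delta/x$ uniformly in $x$, and finally to let $M\to\infty$. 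With the threshold corrected to $\delta x$ and this window-plus-Chebyshev treatment replacing the global use of Stone's error, your argument goes through; as written, the uniformity claim is not established.
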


In the sequel we use several times the following lemma, in which i) does not
require a proof and ii) is a special case of Theorem 1 in \cite{cnw}.

\begin{lemma}
\label{le1} Let $(r_{n})$ be a regularly varying sequence with $%
\sum_{k=0}^{\infty }r_{k}<\infty $.

i) If $\delta _{n}\sim dr_{n}$, $\eta _{n}\sim er_{n}$, then $%
\sum_{i=0}^{n}\delta _{i}\eta _{n-i}\sim cr_{n}$ with $c=d\sum_{k=0}^{\infty
}\eta _{k}+e\sum_{k=0}^{\infty }\delta _{k}$ as $n\rightarrow \infty $.

ii) If $\sum_{k=0}^{\infty }\alpha _{k}t^{k}=\exp \big(\sum_{k=0}^{\infty
}r_{k}t^{k}\big)$ for $|t|<1$, then $\alpha _{n}\sim cr_{n}$ with $%
c=\sum_{k=0}^{\infty }\alpha _{k}$ as $n\rightarrow \infty $.
\end{lemma}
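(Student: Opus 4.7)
The lemma has two parts: part (i) the paper declares as not needing a proof, and for part (ii) a reference to Theorem~1 of \cite{cnw} is invoked. I would follow the same strategy, and my plan is to outline why (i) is indeed routine and to note how (ii) is reduced to the cited result.

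\emph{Part (i)} is the classical convolution asymptotic for summable regularly varying sequences. Summability $\sum_k r_k<\infty$ forces the index $\alpha$ of regular variation of $(r_n)$ to satisfy $\alpha<-1$; in particular $r_n\to 0$, Potter bounds apply, and $r_{n-j}/r_n\to 1$ for each fixed $j$. Fix a truncation level $K$ and decompose
\begin{equation*}
\sum_{i=0}^{n}\delta_i\eta_{n-i}=\sum_{i=0}^{K}\delta_i\eta_{n-i}+\sum_{i=n-K}^{n}\delta_i\eta_{n-i}+\sum_{K<i<n-K}\delta_i\eta_{n-i}.
\end{equation*}
After dividing by $r_n$, term-by-term convergence $\eta_{n-i}/r_n\to e$ makes the first sum tend to $e\sum_{i=0}^{K}\delta_i$, and symmetry gives the second $\to d\sum_{j=0}^{K}\eta_j$. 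For the middle sum, use $\delta_i\leq 2dr_i$ and $\eta_{n-i}\leq 2er_{n-i}$ (valid once $K$ is large) together with the Potter bound $r_{n-i}/r_n\leq C$ uniformly for $n-i\geq n/2$ (and the symmetric estimate $r_i/r_n\leq C$ for $i\geq n/2$) to obtain
\begin{equation*}
\frac{1}{r_n}\sum_{K<i<n-K}\delta_i\eta_{n-i}\leq C'\sum_{j>K}r_j,
\end{equation*}
which tends to $0$ as $K\to\infty$ by summability. Sending first $n\to\infty$ and then $K\to\infty$ yields the claim with $c=d\sum_k\eta_k+e\sum_k\delta_k$.

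\emph{Part (ii)} I would invoke Theorem~1 of \cite{cnw} directly, after verifying that the summable regularly varying sequence $(r_n)$ fits its hypotheses. The constant is pinned down by evaluating the generating function identity at $t\uparrow 1$: $\sum_k\alpha_k=\exp(\sum_k r_k)$, which matches the claim $c=\sum_k\alpha_k$.

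The only real difficulty lies in the middle-range estimate in (i), where contributions from indices $i$ of order $n/2$ must be ruled out; this is exactly where the naive "single big summand" heuristic is least informative, and where Potter's bounds combined with the summability of $(r_k)$ provide the necessary uniform control. Everything else is a standard double-limit $\lim_{K\to\infty}\lim_{n\to\infty}$ built on term-by-term convergence.
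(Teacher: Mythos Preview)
Your proposal is correct and follows exactly the paper's approach: treat (i) as a routine convolution estimate and invoke Theorem~1 of \cite{cnw} for (ii), with your sketch for (i) supplying the standard truncation-plus-Potter-bounds argument the paper omits. One harmless slip: summability of a regularly varying $(r_n)$ only forces the index $\alpha\le -1$ (not $\alpha<-1$), but your argument never actually uses the strict inequality---only Potter bounds and $r_{n-j}/r_n\to 1$, which hold for any regularly varying sequence.
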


We introduce two functions
\begin{eqnarray}
K_{1}\left( \lambda \right)  &=&\frac{1}{\lambda }\exp \left\{
\sum_{n=1}^{\infty }\frac{1}{n}\mathbf{E}\left[ e^{\lambda S_{n}};S_{n}<0%
\right] \right\}   \notag \\
&=&\frac{1}{\lambda }\left( 1+\sum_{n=1}^{\infty }\mathbf{E}\left[
e^{\lambda S_{n}};M_{n}<0\right] \right) =\int_{0}^{\infty }e^{-\lambda
x}U(x)dx,  \label{DefK11}
\end{eqnarray}%
\begin{eqnarray}
K_{2}\left( \lambda \right)  &=&\frac{1}{\lambda }\exp \left\{
\sum_{n=1}^{\infty }\frac{1}{n}\mathbf{E}\left[ e^{-\lambda S_{n}};S_{n}\geq
0\right] \right\}   \notag \\
&=&\frac{1}{\lambda }\left( 1+\sum_{n=1}^{\infty }\mathbf{E}\left[
e^{-\lambda S_{n}};L_{n}\geq 0\right] \right) =\int_{0}^{\infty }e^{-\lambda
z}V(-z)dz,  \label{K22}
\end{eqnarray}%
being well defined for $\lambda >0$.

Note that the intermediate equalities in (\ref{DefK11}) and (\ref{K22}) are
simply versions of the Baxter identities (see, for instance, Chapter XVIII.3
in \cite{fe} or Chapter 8.9 in \cite{BGT}).\hfill

\section{Asymptotic behavior of the distribution of $(S_{n},L_{n})$}

\label{AP}

Basing on the three previous lemmas, we prove the following statement.

\begin{lemma}
\label{LExponent}Assume that $\mathbf{E}\left[ X\right] <0$ and that $A(x)$
meets condition~$(\ref{remainder})$. Then, for any $\lambda >0$ as $%
n\rightarrow \infty $%
\begin{equation}
\mathbf{E}\left[ e^{\lambda S_{n}};\tau _{n}=n\right] =\mathbf{E}\left[
e^{\lambda S_{n}};M_{n}<0\right] \sim K_{1}\left( \lambda \right) b_{n}
\label{AsK_1}
\end{equation}%
and%
\begin{equation}
\mathbf{E}\left[ e^{-\lambda S_{n}};\tau >n\right] =\mathbf{E}\left[
e^{-\lambda S_{n}};L_{n}\geq 0\right] \sim K_{2}\left( \lambda \right) b_{n}.
\label{AsK_2}
\end{equation}
\end{lemma}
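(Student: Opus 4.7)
The plan is to apply part~(ii) of Lemma~\ref{le1} to the Baxter identities already displayed in~(\ref{DefK11})--(\ref{K22}). For~(\ref{AsK_2}) I set $\alpha_n=\mathbf{E}[e^{-\lambda S_n};L_n\geq 0]$ and $r_n=\tfrac{1}{n}\mathbf{E}[e^{-\lambda S_n};S_n\geq 0]$; Baxter's identity then reads $\sum_{n\geq 0}\alpha_n t^n=\exp(\sum_{n\geq 1}r_n t^n)$, and the middle equality in~(\ref{K22}) identifies $\sum_{n\geq 0}\alpha_n$ with $\lambda K_2(\lambda)$. Hence, once I show that $(r_n)$ is regularly varying, summable, and satisfies $r_n\sim b_n/\lambda$, Lemma~\ref{le1}(ii) will give $\alpha_n\sim\lambda K_2(\lambda)\cdot b_n/\lambda=K_2(\lambda)b_n$, which is~(\ref{AsK_2}). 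The proof of~(\ref{AsK_1}) is completely symmetric, working with $\alpha_n=\mathbf{E}[e^{\lambda S_n};M_n<0]$, $r_n=\tfrac{1}{n}\mathbf{E}[e^{\lambda S_n};S_n<0]$, and the constant $\lambda K_1(\lambda)$ read off from~(\ref{DefK11}). The leftmost equalities in both statements are classical: $\{\tau>n\}=\{L_n\geq 0\}$ holds by definition of $\tau$, while $\mathbf{E}[e^{\lambda S_n};\tau_n=n]=\mathbf{E}[e^{\lambda S_n};M_n<0]$ follows from the time reversal $(X_1,\dots,X_n)\mapsto(X_n,\dots,X_1)$, which preserves $S_n$ and sends the event ``the minimum is attained uniquely at time $n$'' to the event ``the running maximum stays strictly below $0$''.

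The central work is the estimate $r_n\sim b_n/\lambda$. Setting $\tilde S_n=S_n+an$ I would write
\begin{equation*}
\mathbf{E}\bigl[e^{-\lambda S_n};S_n\geq 0\bigr]=\int_0^\infty e^{-\lambda v}\,\mathbf{P}\bigl(\tilde S_n\in an+dv\bigr),
\end{equation*}
and split the integral at a cutoff $f(n)=(\log n)^2$. On $[0,f(n)]$ the argument $an+v$ exceeds $n^{2/3}$ for $n$ large, so Lemma~\ref{Tlocal} combined with the slow variation of $l$ and hypothesis~(\ref{remainder}) yields $\mathbf{P}(\tilde S_n\in an+dv)=(1+o(1))\,b_n n\,dv$ uniformly on the interval. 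The corresponding contribution is $(1+o(1))b_n n(1-e^{-\lambda f(n)})/\lambda\sim b_n n/\lambda$. The tail $v\geq f(n)$ is bounded crudely by $e^{-\lambda f(n)}=e^{-\lambda(\log n)^2}$, which is $o(b_n n)$ because $b_n n$ decays only polynomially. Dividing by $n$ gives $r_n\sim b_n/\lambda$. The estimate of $\mathbf{E}[e^{\lambda S_n};S_n<0]$ is structurally identical: the main mass now sits on $\tilde S_n\in[an-M_n,an)$, where the same appeal to Lemma~\ref{Tlocal} delivers the asymptotic, while the event $S_n\leq -M_n$ contributes at most $e^{-\lambda M_n}$, negligible for $M_n=(\log n)^2$.

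The main obstacle is the uniform replacement of $\beta nA(an+v)/(an+v)$ by $b_n n$ across a window of width $(\log n)^2$, which rests on the Potter bounds for the slowly varying $l$ together with hypothesis~(\ref{remainder}) (this is precisely why the latter is imposed). Once this uniform estimate is in hand, one only checks that $b_n$ is regularly varying of index $-(\beta+1)$ and, since $\beta>2$, summable; both applications of Lemma~\ref{le1}(ii) are then legitimate and~(\ref{AsK_1})--(\ref{AsK_2}) follow.
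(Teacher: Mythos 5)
Your proposal is correct and follows essentially the same route as the paper: estimate $\mathbf{E}[e^{-\lambda S_n};S_n\geq 0]\sim b_n n/\lambda$ via the local limit theorem (Lemma~\ref{Tlocal}) on a slowly growing window with a negligible tail, then feed this into the Baxter identity and apply Lemma~\ref{le1}(ii) with $c=\lambda K_2(\lambda)$ (and symmetrically for~(\ref{AsK_1})). The only cosmetic difference is that the paper makes the ``$\mathbf{P}(\tilde S_n\in an+dv)\approx b_n n\,dv$'' step rigorous by an explicit fixed-$\Delta$ discretization with upper and lower Riemann sums before letting $\Delta\to 0$, which your density-style notation implicitly presupposes.
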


\begin{proof}
We prove (\ref{AsK_2}) only. Statement (\ref{AsK_1}) (proved in \cite{vz}
under a bit stronger conditions) may be checked in a similar way. First we
evaluate the quantity
\begin{equation}
\mathbf{E}\left[ e^{-\lambda S_{n}};S_{n}\geq 0\right] =\mathbf{E}\left[
e^{-\lambda S_{n}};0\leq S_{n}<\lambda ^{-1}\left( \beta +2\right) \log n%
\right] +O\left( n^{-\beta -2}\right) .  \label{Fterm}
\end{equation}%
Clearly, for any $\Delta >0$%
\begin{eqnarray*}
&&\sum_{0\leq k\leq \left( \beta +2\right) \lambda ^{-1}\Delta ^{-1}\log
n}e^{-\lambda \left( k+1\right) \Delta }\mathbf{P}\left( k\Delta +an\leq
\tilde{S}_{n}\leq (k+1)\Delta +an\right) \\
&&\qquad \qquad \leq \mathbf{E}\left[ e^{-\lambda S_{n}};0\leq S_{n}<\lambda
^{-1}\left( \beta +2\right) \log n\right] \\
&&\qquad \qquad \leq \sum_{0\leq k\leq \left( \beta +2\right) \lambda
^{-1}\Delta ^{-1}\log n}e^{-\lambda k\Delta }\mathbf{P}\left( k\Delta
+an\leq \tilde{S}_{n}\leq (k+1)\Delta +an\right) .
\end{eqnarray*}%
Recall that by Lemma \ref{Tlocal} in the range of $k$ under consideration%
\begin{eqnarray*}
\mathbf{P}\left( k\Delta +an\leq \tilde{S}_{n}\leq (k+1)\Delta +an\right) &=&%
\frac{\Delta \beta n}{\left( k\Delta +an\right) }A\left( k\Delta +an\right)
(1+o(1)) \\
&=&\frac{\Delta \beta }{a}A\left( an\right) (1+o(1)),
\end{eqnarray*}%
where $o(1)$ is uniform in $0\leq k\leq \left( \beta +2\right) \lambda
^{-1}\Delta ^{-1}\log n$. Now passing to the limit as $n\rightarrow \infty $
we get%
\begin{eqnarray*}
\Delta \sum_{k=0}^{\infty }e^{-\lambda (k+1)\Delta } &\leq &\lim
\inf_{n\rightarrow \infty }\frac{a\mathbf{E}\left[ e^{-\lambda S_{n}};0\leq
S_{n}<\lambda ^{-1}\left( \beta +2\right) \log n\right] }{\beta A\left(
an\right) } \\
&\leq &\limsup_{n\rightarrow \infty }\frac{a\mathbf{E}\left[ e^{-\lambda
S_{n}};0\leq S_{n}<\lambda ^{-1}\left( \beta +2\right) \log n\right] }{\beta
A\left( an\right) } \\
&\leq &\Delta \sum_{k=0}^{\infty }e^{-\lambda k\Delta }.
\end{eqnarray*}%
Letting $\Delta \rightarrow 0+$, we see that%
\begin{equation*}
\lim_{n\rightarrow \infty }\frac{a\mathbf{E}\left[ e^{-\lambda S_{n}};0\leq
S_{n}<\lambda ^{-1}\left( \beta +2\right) \log n\right] }{\beta A\left(
an\right) }=\lambda ^{-1}.
\end{equation*}%
Combining this with (\ref{Fterm}) we conclude that, as $n\rightarrow \infty $%
\begin{equation}
\mathbf{E}\left[ e^{-\lambda S_{n}};S_{n}\geq 0\right] \sim \frac{\beta }{%
a\lambda }A\left( an\right) (1+o(1))\sim \frac{\beta }{a\lambda }\mathbf{P}%
(X>an).  \label{Nee1}
\end{equation}%
We know by the Baxter identity (see, for instance, Chapter 8.9 in \cite{BGT}%
) that for $\lambda >0$ and $t\in \lbrack 0,1]$%
\begin{equation*}
1+\sum_{n=1}^{\infty }t^{n}\mathbf{E}\left[ e^{-\lambda S_{n}};L_{n}\geq 0%
\right] =\exp \left\{ \sum_{n=1}^{\infty }\frac{t^{n}}{n}\mathbf{E}\left[
e^{-\lambda S_{n}};S_{n}\geq 0\right] \right\} .
\end{equation*}%
From (\ref{Nee1}) and point i) of Lemma \ref{le1} with $r_{n}=b_{n}$ we get
for $n\rightarrow \infty $,
\begin{equation*}
\mathbf{E}\left[ e^{-\lambda S_{n}};L_{n}\geq 0\right] \sim K_{2}\left(
\lambda \right) \frac{\beta \mathbf{P}(X>an)}{an},
\end{equation*}%
where $K_{2}\left( \lambda \right) $ is specified by (\ref{K22}). This gives
statement (\ref{AsK_2}) of the lemma.
\end{proof}

\begin{lemma}
\label{pr1} For $x\geq 0,\lambda >0$ we have as $n\rightarrow \infty :$
\begin{equation}
\mathbf{E}_{-x}[e^{\lambda S_{n}}\,;\,M_{n}<0]\ \sim \
b_{n}V(-x)\int_{0}^{\infty }e^{-\lambda z}U(z)\,dz,  \label{H1}
\end{equation}%
\begin{equation}
\mathbf{E}_{x}[e^{-\lambda S_{n}}\,;\,L_{n}\geq 0]\ \sim \
b_{n}U(x)\int_{0}^{\infty }e^{-\lambda z}V(-z)\,dz.  \label{H0}
\end{equation}
\end{lemma}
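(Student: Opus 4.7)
The plan is to reduce (\ref{H0}) to a convolution whose summands' asymptotics follow from Lemma~\ref{LExponent}, apply Lemma~\ref{le1}(i), and simplify by a Lebesgue--Stieltjes integration by parts; (\ref{H1}) will then follow from the mirror-image decomposition at the maximum. Step~1: the space-shift $\mathbf{E}_x[e^{-\lambda S_n};L_n\geq 0]=e^{-\lambda x}\mathbf{E}[e^{-\lambda S_n};L_n\geq -x]$ lets me work with a walk started at $0$. Step~2: decompose according to the first time $T^{\ast}$ at which $(S_0,\dots,S_n)$ attains its overall minimum. When $T^{\ast}=0$ the event reduces to $\{L_n\geq 0\}$; when $T^{\ast}=k\geq 1$ the minimum value is $S_k=u\in[-x,0)$, the pre-$k$ path stays strictly above $u$, and the post-$k$ path stays at or above $u$. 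Reversing the pre-$k$ piece converts ``$S_j>u$ for $j<k$'' into ``$M_k<0$'' for the reversed walk, so $\mathbf{P}(T^{\ast}=k,S_k\in du)=\mathbf{P}(M_k<0,S_k\in du)$ for $u<0$, and the Markov property disentangles the post-$k$ piece. Collecting terms gives
\[
\mathbf{E}[e^{-\lambda S_n};L_n\geq -x]=\sum_{k=0}^{n}\delta_k\,F_{n-k},
\]
where $F_m=\mathbf{E}[e^{-\lambda S_m};L_m\geq 0]$, $\delta_0=1$, and $\delta_k=\int_{[-x,0)}e^{-\lambda u}\,\mathbf{P}(M_k<0,S_k\in du)$ for $k\geq 1$.

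By Lemma~\ref{LExponent}, $F_m\sim K_2(\lambda)\,b_m$. To obtain $\delta_k\sim d\,b_k$ I use (\ref{AsK_1}): $\int_0^\infty e^{-\mu z}\,\mathbf{P}(M_k<0,-S_k\in dz)/b_k\to\int_0^\infty e^{-\mu z}U(z)\,dz$ for every $\mu>0$, so by the continuity theorem for Laplace transforms the measures $\mathbf{P}(M_k<0,-S_k\in dz)/b_k$ converge vaguely on $(0,\infty)$ to $U(z)\,dz$; integrating $e^{\lambda z}\mathbf{1}_{[0,x]}(z)$ against this limit (no atom at $x$) gives $d=\int_0^x e^{\lambda z}U(z)\,dz$. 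Since $b_n$ is regularly varying and $\sum_n b_n<\infty$ (because $\beta>2$), Lemma~\ref{le1}(i) yields
\[
\mathbf{E}[e^{-\lambda S_n};L_n\geq -x]\sim\Bigl(d\sum_{m\geq 0}F_m+K_2(\lambda)\sum_{k\geq 0}\delta_k\Bigr)b_n.
\]
The Baxter-type identity (\ref{K22}) gives $\sum_m F_m=\lambda K_2(\lambda)$, and integration by parts against $dU$ gives $\sum_k\delta_k=\int_{[0,x]}e^{\lambda z}\,dU(z)=e^{\lambda x}U(x)-\lambda d$. The $\lambda d$ terms cancel, leaving the coefficient $K_2(\lambda)e^{\lambda x}U(x)$; undoing the space-shift proves (\ref{H0}).

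For (\ref{H1}) I run the mirror argument: decompose $\{M_n<x\}$ at the \emph{last} time the walk attains its maximum on $[0,n]$, so that reversal of the pre-max piece produces $\mathbf{P}(L_k\geq 0,S_k\in du)$ and (\ref{AsK_2}) supplies the vague limit $V(-z)\,dz$ in place of $U(z)\,dz$; the same convolution-plus-IBP simplification with $U$ and $V$ swapped delivers (\ref{H1}). The main obstacle is the vague-convergence step producing $\delta_k\sim d\,b_k$: one must pass from the pointwise Laplace-transform asymptotic of Lemma~\ref{LExponent} to the integral of a bounded test function with a jump at $x$. This is handled by the extended continuity theorem for Laplace transforms together with the absolute continuity of $U(z)\,dz$ on $(0,\infty)$; alternatively one can sandwich $e^{\lambda z}\mathbf{1}_{[0,x]}$ between finite linear combinations of $e^{-\mu z}$ with $\mu>0$ and invoke Lemma~\ref{LExponent} for each such $\mu$ before tightening the approximation.
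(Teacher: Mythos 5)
Your proposal is correct and is essentially the paper's own argument in mirror image: the paper factorizes $\{M_n<x\}$ at the last time of the maximum, uses duality, Lemma~\ref{LExponent} together with the continuity theorem for Laplace transforms, the convolution Lemma~\ref{le1}~i), the Baxter identities (\ref{DefK11})--(\ref{K22}) and an integration by parts to prove (\ref{H1}) first, declaring (\ref{H0}) analogous, whereas you factorize $\{L_n\geq -x\}$ at the first time of the minimum to prove (\ref{H0}) first and declare (\ref{H1}) analogous. The ingredients, including the vague-convergence step you flag and the cancellation giving the coefficient $e^{\lambda x}U(x)K_2(\lambda)$, match the paper's computation exactly.
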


\begin{proof}
This proof follows the line for proving Proposition 2.1 in \cite{abkv1}. By
the continuity theorem for Laplace transforms Lemmas \ref{le1} and \ref%
{LExponent} give for any $x\in \lbrack 0,\infty )$ and $\lambda >0$
\begin{align}
b_{n}^{-1}\mathbf{E}[e^{\lambda S_{n}};M_{n}<0\,,\,S_{n}>-x]\ & \rightarrow
\ \int_{0}^{x}e^{-\lambda z}U(z)\,dz,  \label{Hirano4} \\
b_{n}^{-1}\mathbf{E}[e^{-\lambda S_{n}};L_{n}\geq 0\,,\,S_{n}<x]\ &
\rightarrow \ \int_{0}^{x}e^{-\lambda z}V(-z)\,dz.  \label{Hirano2}
\end{align}%
Further, using duality we have
\begin{align*}
\mathbf{E}[e^{\lambda S_{n}};M_{n}<x]\ & =\ \sum_{i=0}^{n-1}\mathbf{E}%
[e^{\lambda S_{n}};S_{0},\ldots ,S_{i}\leq S_{i}<x\,,\,S_{i}>S_{i+1},\ldots
,S_{n}] & & \\
& \quad \quad +\ \mathbf{E}[e^{\lambda S_{n}};S_{0},\ldots ,S_{n}\leq
S_{n}<x] & & \\
& =\ \sum_{i=0}^{n-1}\mathbf{E}[e^{\lambda S_{i}};L_{i}\geq 0,S_{i}<x]\cdot
\mathbf{E}[e^{\lambda S_{n-i}};M_{n-i}<0] & & \\
& \quad \quad +\mathbf{E}[e^{\lambda S_{n}};L_{n}\geq 0,S_{n}<x]. & &
\end{align*}%
This formula together with (\ref{AsK_1}), (\ref{Hirano2}), the left
continuity of $V(-z)$ for $z>0$ implying $V(0)=V(0-)=1$, and the equations
\begin{align*}
1+\sum_{k=1}^{\infty }\mathbf{E}[& e^{\lambda S_{k}};L_{k}\geq 0,S_{k}<x] \\
& =\ 1+\int_{(0,x)}e^{\lambda z}\,dV(-z)\ =\ e^{\lambda x}V(-x)-\lambda
\int_{0}^{x}e^{\lambda z}V(-z)\,dz\ , \\
1+\sum_{k=1}^{\infty }\mathbf{E}[& e^{\lambda S_{k}};M_{k}<0]\ =\ \lambda
\int_{0}^{\infty }e^{-\lambda z}U(z)\,dz
\end{align*}%
yield by Lemma \ref{le1} i) that for $\lambda >0$ and $x>0$
\begin{equation}
b_{n}^{-1}\mathbf{E}[e^{\lambda S_{n}};M_{n}<x]\ \rightarrow \
V(-x)e^{\lambda x}\int_{0}^{\infty }e^{-\lambda z}U(z)\,dz,  \notag
\end{equation}%
which gives (\ref{H1}) by multiplying by $\exp (-\lambda x)$. Using similar
arguments one can get (\ref{H0}).
\end{proof}

$\newline
$

The continuity theorem for Laplace transforms and (\ref{H1}) and (\ref{H0})
yield the asymptotic distribution of $(S_n,L_n)$ on compacts sets. \\

\begin{proof}[Proof of Theorem \protect\ref{C_lNonzero}]
By (\ref{H0}) and the continuity theorem for Laplace transforms for any $%
x\geq 0$ and $y>x$ we have%
\begin{equation*}
\mathbf{E}_{x}[e^{-\lambda S_{n}}\,;\,S_{n}<y,L_{n}\geq 0]\ \sim \
b_{n}U(x)\int_{0}^{y}e^{-\lambda z}V(-z)\,dz
\end{equation*}%
giving%
\begin{equation*}
\mathbf{P}_{x}\left( S_{n}<y,L_{n}\geq 0\right) \sim \
b_{n}U(x)\int_{0}^{y}V(-z)\,dz
\end{equation*}%
or%
\begin{equation*}
\mathbf{P}\left( S_{n}<y-x,L_{n}\geq -x\right) \sim \
b_{n}U(x)\int_{0}^{y}V(-z)\,dz
\end{equation*}%
justifying (\ref{NonZeroL}).

The asymptotic representation (\ref{NonzeroM}) may be checked by the same
arguments.
\end{proof}

\section{Conditional description of the random walk}

In this and subsequent sections we agree to denote by $C,C_{1},C_{2},...$
positive constants which may be different in different formulas or even
within one and the same complicated expression.

Our first result shows that the random walk may stay over a fixed level for
a long time only if it has at least one big jump. Let
\begin{equation*}
\mathcal{B}_{j}\left( y\right) =\{X_{j}+a\leq y\},\qquad \mathcal{B}%
^{(n)}\left( y\right) =\cap _{j=1}^{n}\mathcal{B}_{j}\left( y\right) .
\end{equation*}

\begin{lemma}
\label{L_jump} If $\mathbf{E}\left[ X\right] =-a<0$ and condition (\ref%
{remainder}) is valid then there exists $\delta _{0}\in (0,1/4)$ such that
for all $\delta \in (0,\delta _{0}),\,k\in \mathbb{Z}$, and $an/2-u\geq M,$
\begin{equation*}
\mathbf{P}_{u}(\max_{1\leq j\leq n}X_{j}\leq \delta an,\ S_{n}\geq k)\leq
\varepsilon _{M}(k)n^{-\beta -1},\text{ \ where}\qquad \varepsilon
_{M}\left( k\right) \downarrow _{M\rightarrow \infty }0.
\end{equation*}
\end{lemma}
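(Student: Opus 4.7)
The plan is to apply the exponential Markov (Chernoff) bound at a carefully chosen scale. By independence of the $X_{j}$,
\[
\mathbf{P}_{u}\bigl(\max_{1\leq j\leq n}X_{j}\leq\delta an,\,S_{n}\geq k\bigr)\,\leq\,e^{-\lambda(k-u)}\bigl(\mathbf{E}[e^{\lambda X};\,X\leq\delta an]\bigr)^{n}
\]
for every $\lambda>0$. I would take $\lambda=c\log n/(\delta an)$ with a fixed $c\in(0,1)$; this keeps $e^{\lambda\delta an}=n^{c}$ polynomially bounded. Applying $e^{y}\leq 1+y+(y^{2}/2)e^{y_{+}}$ to $y=\lambda X$ and using $\mathbf{E}[X^{2}]<\infty$ together with $\mathbf{E}[X;X>\delta an]=o(1)$ (both consequences of $\beta>2$ and (\ref{Tail0})), one gets $\mathbf{E}[e^{\lambda X};X\leq\delta an]\leq 1-a\lambda(1+o(1))$, hence $(\mathbf{E}[\cdots])^{n}\leq n^{-c/\delta+o(1)}$.

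Since $u\leq an/2-M$, one also has $e^{-\lambda(k-u)}\leq n^{c/(2\delta)}\cdot n^{-c(k+M)/(\delta an)}$. Combining,
\[
\mathbf{P}_{u}(\cdots)\,\leq\,n^{-c/(2\delta)+o(1)}\cdot n^{-c(k+M)/(\delta an)}.
\]
Any $\delta_{0}<c/(2(\beta+1))$ serves as the threshold: since $\beta>2$, this automatically lies in $(0,1/6)\subset(0,1/4)$, and it makes $c/(2\delta)\geq\beta+1+\eta$ for some $\eta>0$. I then set
\[
\varepsilon_{M}(k)\,:=\,\sup_{n\geq 1}\,n^{-\eta}\cdot n^{-c(k+M)/(\delta an)},
\]
and observe that, for $k+M>0$, the exponent $\log n\cdot(\eta+c(k+M)/(\delta an))$ is minimized at some $n^{\ast}$ which, by direct differentiation, satisfies $n^{\ast}\sim c(k+M)\log(k+M)/(\eta\delta a)$. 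Plugging back yields $\varepsilon_{M}(k)=O((k+M)^{-\eta})\to 0$ as $M\to\infty$, which gives the claimed bound (the few small-$n$ contributions are controlled separately by a polynomial Markov argument on $X_{1}^{+}$, using a finite $p$-th moment for some $p<\beta$).

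The main technical obstacle is the extraction of the $M$-dependence. The heavy tail forces $\lambda$ to be at most of order $\log n/n$ (any larger and the truncated moment generating function $\mathbf{E}[e^{\lambda X};X\leq\delta an]$ becomes dominated by its near-$\delta an$ contribution and can no longer be brought below $1$); but at that scale, $e^{-\lambda M}\to 1$ for each fixed $M$ as $n\to\infty$, so the Chernoff bound at any single $n$ yields only a fixed power-of-$n$ with no $M$-dependence. The decay in $M$ therefore cannot be read off pointwise in $n$: it emerges only after taking the supremum over $n$, because the optimizer $n^{\ast}$ itself shifts with $M$. The delicate point of the proof is thus this calculus optimization sitting on top of the Chernoff estimate.
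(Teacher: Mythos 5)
Your route is genuinely different from the paper's, and it is viable. The paper does not tilt at all: it quotes a Fuk--Nagaev type inequality (Theorem 4.1.2 and Corollary 4.1.3 of \cite{BB2005}), namely $\mathbf{P}\bigl(\max_{j\le n}(X_j+a)\le x\sigma r^{-1},\,S_n+an\ge x\sigma\bigr)\le\bigl[n\mathbf{P}(X+a\ge\sigma xr^{-1})\bigr]^{r-\delta}$, applies it with $x\sigma$ of order $an/2$ and $r$ so large that $(r-\delta)/6>\beta+1$ (taking $\delta_0=1/(2r)$), which yields $Cn^{-\beta-1-\gamma}$ at once, and then uses that the left-hand side is monotone in $\delta$. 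Your truncated Chernoff bound with $\lambda=c\log n/(\delta an)$, $c<1$, is an elementary, self-contained substitute: the estimate $\mathbf{E}[e^{\lambda X};X\le\delta an]\le 1-a\lambda(1+o(1))$ is correct because $\lambda^2 n^{c}=o(\lambda)$ for $c<1$ and $\mathbf{E}[X^2]<\infty$, and the exponent bookkeeping ($c/(2\delta)>\beta+1$ for $\delta<\delta_0<c/(2(\beta+1))<1/6$) is right. What your approach buys is independence from the external inequality and an explicit treatment of the $M$-dependence, on which the paper itself is terse (its displayed bound gives $\varepsilon\sim n^{-\gamma}$, and the decay in $M$ is left implicit); what the paper's approach buys is brevity and a much stronger power of $n$ from the large exponent $r$.

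There is, however, one concrete flaw to repair. As written, $\varepsilon_M(k)=\sup_{n\ge1}n^{-\eta}n^{-c(k+M)/(\delta an)}$ is identically equal to $1$: at $n=1$ both factors are $1$, so the supremum sits at $n=1$, not at your $n^\ast$, and it does not decay in $M$. The optimization you describe is valid only after restricting the supremum to $n\ge n_0$, where $n_0\ge2$ is the (fixed, $M$- and $k$-independent) threshold beyond which your $o(1)$'s are in force; on $[n_0,\infty)$ the exponent $\eta\log n+\frac{c(k+M)}{\delta a}\frac{\log n}{n}$ is indeed minimized, for $k+M$ large, at an interior point $n^\ast\asymp(k+M)\log(k+M)$, giving $O((k+M)^{-\eta})$ as you claim. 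You must then fold the range $n<n_0$ into the definition of $\varepsilon_M(k)$ rather than leave it as an afterthought: for the finitely many $n<n_0$ one has $k-u\ge k+M-an_0/2$, so your Markov bound works, and in fact on the event $\max_jX_j\le\delta an$ the probability vanishes once $M>\delta an_0^2+an_0/2-k$. Finally, your thresholds and $o(1)$'s depend on $\delta$ and degenerate as $\delta\downarrow0$; as in the paper, dispose of this by noting that the event is monotone in $\delta$, so it suffices to prove the bound at $\delta=\delta_0$ (and replace $\varepsilon_M(k)$ by $\sup_{M'\ge M}\varepsilon_{M'}(k)$ to get the stated monotonicity in $M$). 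With these adjustments your argument is complete.
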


\begin{proof}
Set $Y_{n}=\left( S_{n}+an\right) /\sigma $ where $\sigma ^{2}=Var(X)$ and $%
S_{0}=0$. It follows from Theorem 4.1.2 and Corollary 4.1.3 (i) in \cite%
{BB2005} (see also estimate (4.7.7) in the mentioned book) that if $r>2$ and
$\delta >0$ are fixed then for $x\geq n^{2/3}$ and all sufficiently large $n$
\begin{equation*}
\mathbf{P}(\mathcal{B}^{(n)}\left( x\sigma r^{-1}\right) ,Y_{n}\geq x)\leq %
\left[ n\mathbf{P}(X+a\geq \sigma xr^{-1})\right] ^{r-\delta }.
\end{equation*}%
Since $l(x)$ in (\ref{Tail0}) is slowly varying, $x^{-1/4}l(x)\rightarrow 0$
as $x\rightarrow \infty $. Hence we get for all sufficiently large $n$ and $%
\beta >2$
\begin{equation*}
\left[ n\mathbf{P}(X+a\geq \sigma xr^{-1})\right] ^{r-\delta }\leq C\left(
\frac{nl(x)}{x^{\beta }}\right) ^{r-\delta }\leq C\left( \frac{n}{x^{\beta
-1/4}}\right) ^{r-\delta }\leq C\left( \frac{1}{n^{1/6}}\right) ^{r-\delta }.
\end{equation*}%
We fix now $r>2,\delta _{0}<1/4$ with $r\delta _{0}=1/2$ so that $\left(
r-\delta \right) /6>\beta +1$ for all $\delta \in \left( 0,\delta
_{0}\right) $. As a result we obtain that there exists $\gamma >0$ such that
\begin{equation*}
\mathbf{P}_{u}(\mathcal{B}^{(n)}\left( x\sigma r^{-1}\right) ,\ S_{n}\geq
x\sigma -an+u)\leq Cn^{-\beta -1-\gamma }
\end{equation*}%
for all $x\geq n^{2/3}$ where now $S_{0}=u$. Setting $x\sigma =r\delta
_{0}an $ we get%
\begin{equation*}
\mathbf{P}_{u}(\mathcal{B}^{(n)}\left( \delta _{0}an\right) ,\ S_{n}\geq
-an/2+u)\leq Cn^{-\beta -1-\gamma }.
\end{equation*}%
Therefore, for every $k\in \mathbb{Z}$
\begin{equation}
\mathbf{P}_{u}(\max_{1\leq j\leq n}X_{j}\leq \delta _{0}an;\ S_{n}\geq
k)\leq Cn^{-\beta -1-\gamma }  \label{bigjump}
\end{equation}%
for all $an/2-u\geq M\rightarrow \infty $. Since the left-hand side is
decreasing when $\delta _{0}\downarrow 0$ the desired statement follows.
\end{proof}

$\newline
$

We know by (\ref{NonZeroL}) that for any fixed $N$ and $l\geq -N$%
\begin{equation*}
\mathbf{P}\left( L_{n}\geq -N,\,\,S_{n}\in \lbrack l,l+1)\right) \sim \
b_{n}U(N)\int_{N+l}^{N+l+1}V(-z)\,dz,\ n\rightarrow \infty .
\end{equation*}%
Hence, applying Lemma \ref{L_jump} with $u=0$ we conclude that, as $%
n\rightarrow \infty $%
\begin{equation*}
\mathbf{P}\left( L_{n}\geq -N,\,\,S_{n}\in \lbrack l,l+1)\right) \sim
\mathbf{P}\left( L_{n}\geq -N,\,\,S_{n}\in \lbrack l,l+1);\mathcal{\bar{B}}%
^{(n)}\left( \delta _{0}an\right) \right) ,
\end{equation*}%
meaning that for the event $\left\{ L_{n}\geq -N,\,\,S_{n}\in \lbrack
l,l+1)\right\} $ to occur it is necessary to have at least one jump
exceeding $\delta _{0}an$. The next statement shows that, in fact, there is
exactly one such big jump on the interval $[0,n]$ that gives the
contribution of order $b_{n}$ to (\ref{NonZeroL}) and the jump occurs at the
beginning of the interval.

\begin{lemma}
\label{L_jumpBegin} Under conditions $\mathbf{E}\left[ X\right] =-a<0$ and (%
\ref{remainder}) for any fixed $l$ and $\delta >0$
\begin{equation*}
\lim_{J\rightarrow \infty }\limsup_{n\rightarrow \infty }b_{n}^{-1}\mathbf{P}%
\left( L_{n}\geq -N,\,\max_{J\leq j\leq n}X_{j}\geq \delta an,\,S_{n}\in
\lbrack l,l+1)\right) =0
\end{equation*}%
and, for any fixed $J$
\begin{equation*}
\lim_{n\rightarrow \infty }b_{n}^{-1}\mathbf{P}\left( \cup _{i\neq
j}^{J}\left\{ X_{i}\geq \delta an,\,X_{j}\geq \delta an\right\} \right) =0.
\end{equation*}
\end{lemma}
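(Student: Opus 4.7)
I would handle the two statements separately: (ii) is an immediate consequence of independence, while (i) requires a finer decomposition along the time of the big jump.

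For (ii), independence yields
\[
\mathbf{P}\bigl(\cup_{i \neq j,\, 1 \leq i, j \leq J}\{X_i \geq \delta an, X_j \geq \delta an\}\bigr) \leq J(J-1)\, A(\delta an)^2,
\]
and by (\ref{Tail0}) the ratio $A(\delta an)^2 / b_n$ is of order $l(an)/n^{\beta-1}$, which tends to zero since $\beta > 2$.

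For (i), I would apply a union bound over the position $j$ of the big jump, $\mathbf{P}(\mathrm{event}) \leq \sum_{j=J}^n \mathbf{P}_j$, with $\mathbf{P}_j := \mathbf{P}(L_n \geq -N, X_j \geq \delta an, S_n \in [l, l+1))$. The Markov property and independence of pre-jump, jump, and post-jump give
\[
\mathbf{P}_j = \int \mathbf{P}(L_{j-1} \geq -N, S_{j-1} \in dy) \int_{\delta an}^\infty \mathbf{P}(X \in dw)\, \mathbf{P}_{y+w}\bigl(L_{n-j-1} \geq -N, S_{n-j-1} \in [l, l+1)\bigr).
\]
Reversing the post-jump walk (as in the proof of Lemma~\ref{pr1}) bounds the innermost probability by $\mathbf{P}(M_{n-j-1} < N+l+1,\, S_{n-j-1} \in [l-y-w, l-y-w+1))$. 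The central estimate I aim to show is that the $w$-integral is at most $C b_n$, uniformly in $y \geq -N$ and in $j$ with $n-j \geq \delta n / 2$, and is exponentially small for the remaining $j$. Heuristically, the integrand concentrates in a window of width $O(\sqrt{n-j})$ around $w \approx a(n-j)$: there, (\ref{remainder}) gives jump density $\sim \beta A(a(n-j))/(a(n-j))$ and the Gaussian local CLT gives a per-cell probability $\sim 1/\sqrt{n-j}$, whose product integrates to order $b_{n-j} \sim b_n$. Contributions from $w$ outside this window are handled by Lemma~\ref{Tlocal} (in the heavy-tail regime of $S_{n-j-1}$) or by Chebyshev-type bounds on the centered walk.

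Granting this bound, $\mathbf{P}_j \leq C b_n\, \mathbf{P}(L_{j-1} \geq -N)$; combining with Lemma~\ref{Ttail} ($\mathbf{P}(L_{j-1} \geq -N) \sim U(N) e^D A(aj)$) yields $\sum_{j=J}^n \mathbf{P}_j \leq C b_n \sum_{j \geq J} A(aj)$, which tends to $0$ as $J \to \infty$ because $(A(ak))_{k \geq 1}$ is summable for $\beta > 2$. The main obstacle will be proving the uniform $w$-integral estimate: it requires carefully patching together the heavy-tail local limit of Lemma~\ref{Tlocal} and the standard Gaussian local limit theorem for the critical window around $w \sim a(n-j)$, while simultaneously respecting the bounded-maximum constraint inherited from the reversal step.
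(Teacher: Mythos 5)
Your treatment of the second assertion (two big jumps before time $J$) is correct and is exactly the paper's argument. Your overall architecture for the first assertion --- union bound over the jump time $j$, Markov decomposition at time $j$, the target bound $\mathbf{P}_j\leq Cb_n\,\mathbf{P}(L_{j-1}\geq -N)$, and then summability of $\mathbf{P}(L_{j-1}\geq -N)\sim U(N)e^{D}A(aj)$ via Lemma~\ref{Ttail} --- also coincides with the paper. But the heart of the lemma is precisely the ``central estimate'' you leave as an acknowledged obstacle, so as it stands the proof has a genuine gap, and the route you sketch for closing it has two problems. First, for the range $n-j<\delta n/2$ you claim the contribution is ``exponentially small''; nothing in the hypotheses supports this, since only $\mathbf{E}X=-a$ and finite variance are assumed (the left tail of $X$ need not have exponential moments), so the best direct bound on $\mathbf{P}(S_m+am\leq -\delta an/2+C)$ is polynomial. (That range can still be salvaged crudely: there both $A(\delta an)$ and $\mathbf{P}(L_{j-1}\geq -N)$ are $O(n^{-\beta}l(n))$, so summing over at most $n$ values of $j$ gives $O(n^{1-2\beta}l^2(n))=o(b_n)$ because $\beta>2$ --- but you would need to say this, not invoke exponential decay.) Second, the window-patching of Lemma~\ref{Tlocal} with the Gaussian local limit theorem, uniformly in the starting point $y\geq -N$ and in $j$, is exactly the delicate uniformity you flag and do not carry out; without it the proof is incomplete.

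The paper avoids all of this with a much simpler device, and it is worth internalizing. From (\ref{remainder}) and monotonicity, $\mathbf{P}(X\in[t,t+dt))\leq C\,\mathbf{P}(X>\delta an)(\delta an)^{-1}\,dt$ for $t\geq\delta an$, so the jump measure on $[\delta an,\infty)$ is dominated by a constant multiple of $b_n\,dt$. Then the $t$-integral is killed by Fubini with no local limit theorem at all:
\begin{equation*}
\int_{\delta an}^{\infty}\mathbf{P}\bigl(S_{n-j}\in[l-t-s,\,l-t-s+1)\bigr)\,dt
\ \leq\ \int_{-\infty}^{\infty}\mathbf{P}(S_{n-j}\in dw)\int_{l-s-w}^{l-s-w+1}dt\ =\ 1 ,
\end{equation*}
uniformly in $s$, in $j$ (including $j$ close to $n$, so no case distinction is needed), and in $n$. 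This immediately yields $\mathbf{P}_j\leq C b_n\,\mathbf{P}(L_{j-1}\geq -N)$, after which your concluding summation argument is exactly right. Note also that the minimum constraint on the post-jump piece is simply dropped in this bound; your time-reversal to a maximum condition is harmless but unnecessary.
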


\begin{proof}
Write for brevity $S_{n}\in \lbrack l)$ if $S_{n}\in \lbrack l,l+1)$. Then%
\begin{eqnarray*}
&&\mathbf{P}\left( L_{n}\geq -N,X_{j}\geq \delta an,S_{n}\in \lbrack
l)\right) \\
&&\quad \leq \int_{-N}^{\infty }\mathbf{P}\left( S_{j-1}\in ds,L_{j-1}\geq
-N\right) \times \\
&&\quad \qquad \quad \int_{\delta an}^{\infty }\mathbf{P}\left( X_{j}\in
dt\right) \mathbf{P}\left( S_{n-j}\in \lbrack l-t-s),L_{n-j}\geq
-t-s-N\right) \\
&&\quad \leq \int_{-N}^{\infty }\mathbf{P}\left( S_{j-1}\in ds,L_{j-1}\geq
-N\right) \int_{\delta an}^{\infty }\mathbf{P}\left( X_{j}\in dt\right)
\mathbf{P}\left( S_{n-j}\in \lbrack l-t-s)\right) .
\end{eqnarray*}%
By condition (\ref{remainder}),%
\begin{equation*}
\mathbf{P}\left( X_{j}\in \lbrack t)\right) \leq C\frac{\mathbf{P}\left(
X>t\right) }{t},\ t>0.
\end{equation*}%
This estimate and its monotonicity in $t$ gives%
\begin{eqnarray*}
&&\int_{-N}^{\infty }\mathbf{P}\left( S_{j-1}\in ds;L_{j-1}\geq -N\right)
\int_{\delta an}^{\infty }\mathbf{P}\left( X_{j}\in dt\right) \mathbf{P}%
\left( S_{n-j}\in \lbrack l-t-s)\right) \\
&\leq &C_{1}\frac{\mathbf{P}\left( X\geq \delta an\right) }{n}%
\int_{-N}^{\infty }\mathbf{P}\left( S_{j-1}\in ds;L_{j-1}\geq -N\right)
\int_{\delta an}^{\infty }\mathbf{P}\left( S_{n-j}\in \lbrack l-t-s)\right)
dt.
\end{eqnarray*}

Now%
\begin{eqnarray*}
\int_{\delta an}^{\infty }\mathbf{P}\left( S_{n-j}\in \lbrack l-t-s)\right)
dt &\leq &\int_{-\infty }^{\infty }dt\int_{l-t-s}^{l-t-s+1}\mathbf{P}\left(
S_{n-j}\in dw\right) \\
&&\quad =\int_{-\infty }^{\infty }\mathbf{P}\left( S_{n-j}\in dw\right)
\int_{l-s-w}^{l-s-w+1}dt=1.
\end{eqnarray*}%
Thus,%
\begin{eqnarray*}
&&\mathbf{P}\left( L_{n}\geq -N,X_{j}\geq \delta an,S_{n}\in \lbrack
l)\right) \\
&&\qquad \leq C\frac{\mathbf{P}\left( X>an\right) }{n}\int_{-N}^{\infty }%
\mathbf{P}\left( S_{j-1}\in ds,L_{j-1}\geq -N\right) \\
&&\qquad =C\frac{\mathbf{P}\left( X>an\right) }{n}\mathbf{P}\left(
L_{j-1}\geq -N\right) =C_{1}b_{n}\mathbf{P}\left( L_{j-1}\geq -N\right) .
\end{eqnarray*}%
By (\ref{Ratio}) the series $\sum_{j\geq 1}\mathbf{P}\left( L_{j-1}\geq
-N\right) $ converges meaning that a big jump may occur at the beginning
only. Moreover, it is unique on account of the estimate
\begin{equation*}
\mathbf{P}\left( X_{i}\geq \delta an,X_{j}\geq \delta an\right) =O\left(
l^{2}(n)n^{-2\beta }\right) =o\left( b_{n}\right)
\end{equation*}%
for all $i\neq j$ with $\max \left( i,j\right) \leq J$ and $\beta >2$.
\end{proof}

\bigskip The next lemma gives an additional information about the properties
of the random walk in the presence of a big jump. Let%
\begin{equation*}
\mathcal{R}_{\delta }\left( M,K\right) =\left\{ \delta an\leq X_{1}\leq an-M%
\sqrt{n},\,\left\vert S_{n}\right\vert \leq K\right\}
\end{equation*}%
and
\begin{equation*}
\mathcal{R}\left( M,K\right) =\left\{ X_{1}\geq an+M\sqrt{n},\,\left\vert
S_{n}\right\vert \leq K\right\} .
\end{equation*}

\begin{lemma}
\label{L_replace}Under conditions $\mathbf{E}\left[ X\right] =-a<0$ and (\ref%
{remainder}) for any $\delta \in (0,1)$ and each fixed $K$,%
\begin{equation*}
\lim_{M\rightarrow \infty }\limsup_{n\rightarrow \infty }b_{n}^{-1}\mathbf{P}%
\left( \mathcal{R}_{\delta }\left( M,K\right) \cup \mathcal{R}\left(
M,K\right) \right) =0.
\end{equation*}
\end{lemma}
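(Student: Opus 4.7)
My plan is to decouple $X_1$ from the remainder $S'_{n-1}:=X_2+\cdots+X_n$ (independent of $X_1$ and distributed as $S_{n-1}$), use condition~(\ref{remainder}) to bound the local probability that $X_1$ lies in an interval of length $O(K)$ by $C(K,\delta)\,b_n$, and reduce what is left to a central limit estimate on the centered walk $\tilde S'_{n-1}:=S'_{n-1}+(n-1)a$.

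First, conditioning on $S'_{n-1}=z$ and using independence I would write
\begin{equation*}
\mathbf{P}\bigl(\mathcal{R}_\delta(M,K)\cup\mathcal{R}(M,K)\bigr)=\int\mathbf{P}\bigl(X_1\in A\cap[-K-z,K-z]\bigr)\,\mathbf{P}(S'_{n-1}\in dz),
\end{equation*}
with $A:=[\delta an,an-M\sqrt n]\cup[an+M\sqrt n,\infty)$. Since the gap in $A$ has length $2M\sqrt n\gg 2K$, the intersection $A\cap[-K-z,K-z]$ is always either empty or an interval of length at most $2K$ contained in $[\delta an,\infty)$. Applying (\ref{remainder}) with $\Delta=2K$ together with the slow variation of $l$ and the monotonicity of $A$ gives the uniform bound $\mathbf{P}(X_1\in I)\leq C(K,\delta)\,b_n$ for every such interval $I$ and all large $n$. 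Plugging this in reduces the problem to controlling $\mathbf{P}(S'_{n-1}\in Z_{M,n})$, where $Z_{M,n}=\{z:A\cap[-K-z,K-z]\neq\varnothing\}$. A direct translation (using $|X_1-an|\geq M\sqrt n$ on $A$) shows $Z_{M,n}\subset\{|\tilde S'_{n-1}|\geq M\sqrt n/2\}$ as soon as $M\sqrt n\geq 2(K+a)$.

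Finally, since $\mathbf{E}[X]=-a$ and $\beta>2$ forces $\mathrm{Var}(X)=\sigma^2<\infty$, the classical central limit theorem applied to $\tilde S'_{n-1}/\sqrt{n-1}\Rightarrow \sigma Z$ yields
\begin{equation*}
\limsup_{n\to\infty}\mathbf{P}\bigl(|\tilde S'_{n-1}|\geq M\sqrt n/2\bigr)\leq 2\bigl(1-\Phi\bigl(M/(2\sigma)\bigr)\bigr),
\end{equation*}
which vanishes as $M\to\infty$. The main obstacle I anticipate is pure bookkeeping: verifying that the slow variation of $l$ yields a constant independent of $x$ in the local bound on $\mathbf{P}(X_1\in I)$ uniformly over $x\in[\delta an,\infty)$, and checking that the very heavy right tail of $X$ does not create an extra contribution in the regime $x\gg an$ --- it cannot, because there $\tilde S'_{n-1}$ must undergo a matching extreme negative deviation, still captured by the same CLT bound.
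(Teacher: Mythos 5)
Your proof is correct and follows essentially the same route as the paper's: decouple $X_{1}$ from the sum of the remaining increments, bound the probability that $X_{1}$ falls in a window of width $O(K)$ deep in the tail by $C(K,\delta)\,b_{n}$ via condition~(\ref{remainder}) and slow variation, and control the leftover event $\{|\tilde S'_{n-1}|\geq M\sqrt n/2\}$ by the central limit theorem. The only cosmetic differences are that you condition on the remainder sum (the paper integrates over $X_{1}\in dx$ and applies Fubini) and treat $\mathcal{R}_{\delta}$ and $\mathcal{R}$ simultaneously, which changes nothing of substance.
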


\begin{proof}
Similarly to the previous lemma we have
\begin{eqnarray*}
\mathbf{P}\left( \mathcal{R}_{\delta }\left( M,K\right) \right)
&=&\int_{\delta an}^{an-M\sqrt{n}}\mathbf{P}\left( S_{n-1}\in \left[ -K-x,K-x%
\right] \right) \mathbf{P}\left( X_{1}\in dx\right) \\
&\leq &C\frac{\mathbf{P}\left( X>\delta an\right) }{\delta an}\int_{\delta
an}^{an-M\sqrt{n}}\mathbf{P}\left( S_{n-1}\in \left[ -2K-x,2K-x\right]
\right) dx \\
&=&C\frac{\mathbf{P}\left( X>\delta an\right) }{\delta an}\int_{\delta
an}^{an-M\sqrt{n}}dx\int_{-2K-x}^{2K-x}\mathbf{P}\left( S_{n-1}\in dv\right)
\\
&\leq &4KC\frac{\mathbf{P}\left( X>\delta an\right) }{\delta an}%
\int_{-2K-an+M\sqrt{n}}^{2K-\delta an}\mathbf{P}\left( S_{n-1}\in dv\right)
\\
&\leq &4KC\frac{\mathbf{P}\left( X>\delta an\right) }{\delta an}\mathbf{P}%
\left( S_{n-1}\geq -2K-an+M\sqrt{n}\right)
\end{eqnarray*}%
and%
\begin{eqnarray*}
\mathbf{P}\left( \mathcal{R}\left( M,K\right) \right) &=&\int_{an+M\sqrt{n}%
}^{\infty }\mathbf{P}\left( S_{n-1}\in \left[ -K-x,K-x\right] \right)
\mathbf{P}\left( X_{1}\in dx\right) \\
&\leq &C\frac{\mathbf{P}\left( X>an\right) }{an}\int_{an+M\sqrt{n}}^{\infty }%
\mathbf{P}\left( S_{n-1}\in \left[ -2K-x,2K-x\right] \right) dx \\
&\leq &4KC\frac{\mathbf{P}\left( X>an\right) }{an}\mathbf{P}\left(
S_{n-1}\leq 2K-an-M\sqrt{n}\right) .
\end{eqnarray*}%
Since $\limsup_{n\rightarrow \infty }\mathbf{P}\left( \left\vert
S_{n-1}+an\right\vert \geq M\sqrt{n}\right) $ decreases to $0$ as $%
M\rightarrow \infty $ by the central limit theorem, the desired statement
follows.
\end{proof}

\section{Proof of Theorem \protect\ref{thdecomp}\label{preuve}}

We start by the following important statement.

\begin{lemma}
\label{unif} Let $F_{n}$ be a bounded family of uniformly equicontinuous
functions as defined in Theorem \ref{thdecomp} by (\ref{Ucont}). Then the
family of functions
\begin{equation*}
g_{n}(s)=\sqrt{n}\mathbf{E}_{s}\left[ F_{n}(\mathbf{S}_{n});L_{n}\geq
-x,S_{n}\leq T\right] ,\quad n=1,2,...,
\end{equation*}
is uniformly equicontinuous and uniformly bounded in $s\in \mathbb{R}$.
\end{lemma}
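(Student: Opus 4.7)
Since $\{L_n\geq -x\}$ is impossible when $s<-x$, one has $g_n(s)=0$ on that range, so I would restrict attention to $s\geq -x$. My strategy is to control both $\sup_s|g_n(s)|$ and the modulus of continuity of $g_n$ in $s$ by exploiting the big-jump decomposition of Lemmas~\ref{L_jump}--\ref{L_replace}, suitably extended so that the error bounds are uniform in the starting position. Those lemmas say that on the event $\{L_n\geq -x,\,S_n\leq T\}$ the trajectory must contain a unique large jump, that it occurs at some time $j\leq J$, and that its size lies in the window $[an-M\sqrt{n},\,an+M\sqrt{n}]$ up to a contribution which is $o(b_n)$ after $J$ and $M$ are sent to infinity. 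The first task is to re-run those three lemmas for the walk started at a general $s\geq -x$, producing error bounds that depend on $x$, $T$, $J$, $M$ but not on $s$ or $n$.

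For each fixed $j$, I would apply the Markov property at times $j-1$ and $j$ and time-reverse the post-jump segment. This writes the time-$j$ contribution to $g_n(s)$ as
\begin{equation*}
\sqrt{n}\int \mathbf{E}_s\bigl[F\mathbf{1}\{L_{j-1}\geq -x\}\bigr]\,\tfrac{\beta\mathbf{P}(X>y)}{y}(1+o(1))\,\mathbf{E}_{w}^{\prime}\bigl[\widetilde{F}\mathbf{1}\{L_{n-j-1}^{\prime}\geq -x\}\bigr]\,dy,
\end{equation*}
where $\mathbf{S}^{\prime}$ is the walk with step $-X$ (positive drift $+a$), $y=an+\xi\sqrt{n}$ is the jump size with $|\xi|\leq M$, and $w$ is determined by $s,S_{j-1},y$ and the final value $S_n$. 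Concentrating $y$ in $[an-M\sqrt{n},\,an+M\sqrt{n}]$ using (\ref{remainder}) turns the heavy-tail density into $b_n(1+o(1))$, while the Gaussian local limit theorem for $\mathbf{S}^{\prime}$ (valid since $\beta>2$ gives finite variance) supplies a density of order $1/\sqrt{n}$ at the target endpoint. These two factors together exactly absorb the prefactor $\sqrt{n}$ and yield an $O(1)$ quantity, uniformly in $s$ and $\xi$.

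Uniform boundedness then follows by summing finitely many such bounded terms over $j\leq J$ and $|\xi|\leq M$, plus a uniformly small error from the complementary event. For uniform equicontinuity, I would split $|g_n(s+h)-g_n(s)|$ into three pieces: the change of $F_n$, handled by the equicontinuity hypothesis (\ref{Ucont}); the change of the pre-jump factor $\mathbf{E}_s[F\mathbf{1}\{L_{j-1}\geq -x\}]$, which is bounded by the mass that $\mathbf{S}_{j-1}$ puts in a strip of width $|h|$ together with the continuity of $F$; and the change of the post-jump factor, which comes from shifting the target endpoint by $h$ in the local CLT and is controlled by the uniform continuity of the Gaussian density. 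Each piece is made small uniformly in $n$ and $s$ by taking $h$ small.

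The main obstacle is to make the auxiliary estimates from Lemmas~\ref{L_jumpBegin} and~\ref{L_replace} uniform in the starting position $s$, since they were derived starting from $0$. This forces one to treat three regimes separately: $s=O(1)$, where Theorem~\ref{C_lNonzero} gives $\sqrt{n}\,\mathbf{P}_s(L_n\geq -x,S_n\leq T)\sim\sqrt{n}\,b_n U(x+s)\cdot\mathrm{const}\to 0$; $|s-an|=O(\sqrt{n})$, where the Gaussian local CLT for the walk itself provides the needed $1/\sqrt{n}$ density; and $s\gg an$, where the condition $S_n\leq T$ is a Gaussian large-deviation event whose probability beats any power of $\sqrt{n}$. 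Verifying that the bounds obtained in the three regimes patch together without deterioration of constants is the technical heart of the argument.
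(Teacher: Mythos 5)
There is a genuine gap, and it sits exactly where the real difficulty of Lemma~\ref{unif} lies. Your equicontinuity argument accounts for the change of $F_{n}$ (via (\ref{Ucont})), for shifting the terminal window, and for shifting a \emph{finite} pre-jump segment, but it never addresses the shift of the barrier over the long stretch of the walk: writing $\mathbf{E}_{s+\epsilon}[\cdots]=\mathbf{E}_{s}[F_{n}(\mathbf{S}_{n}+\epsilon \mathbf{1}_{n+1});L_{n}+\epsilon \geq -x,S_{n}+\epsilon \leq T]$, the problematic term is
\begin{equation*}
\sqrt{n}\,\mathbf{P}_{s}\bigl(L_{n}\in \lbrack -x-\epsilon ,-x),\,S_{n}\leq T\bigr),
\end{equation*}
which must be made small uniformly in $n$ and $s$. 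Attributing the change of the post-jump factor solely to the continuity of the Gaussian density in the endpoint ignores this constraint on the running minimum; and in the only regime where $g_{n}$ is not vanishing, namely $s=an+O(\sqrt{n})$, there is no big jump at all, so your jump decomposition gives no handle on it. The paper treats precisely this term by decomposing at the (first) time $k$ of the minimum, using the Stone local limit theorem uniformly for $k\in \lbrack n-\sqrt{n},n]$ together with the heavy-tail estimate $\mathbf{P}(L_{m}\geq 0)=O(m^{-\beta })$ from Lemma~\ref{Ttail}, the early times being killed by $\sqrt{n}\sum_{m\geq \sqrt{n}}m^{-\beta }\rightarrow 0$ since $\beta >2$. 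Some analogue of this step is indispensable and is absent from your plan.

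Beyond that, the strategy itself is both unsound in places and far heavier than needed. Lemma~\ref{L_jump} requires $an/2-u\geq M$, so the big-jump lemmas cannot be ``re-run uniformly in the starting position'': for $s$ of order $an$ the event $\{L_{n}\geq -x,S_{n}\leq T\}$ has probability of order $n^{-1/2}\gg b_{n}$ and needs no jump. Your three regimes ($s=O(1)$, $|s-an|=O(\sqrt{n})$, $s\gg an$) leave uncovered ranges such as $s\asymp \delta n$ with $0<\delta <1$, and the claim that for $s\gg an$ the event $S_{n}\leq T$ ``beats any power of $\sqrt{n}$'' is unjustified: only the second moment controls the lower tail, so Chebyshev-type bounds are merely polynomial. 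Also, the displayed factorization of the time-$j$ contribution presupposes that $F_{n}$ splits into a pre-jump and a post-jump functional, which a general $F_{n}(\mathbf{S}_{n})$ does not. All of this machinery is unnecessary: uniform boundedness follows in one line from $|g_{n}(s)|\leq C\sqrt{n}\,\mathbf{P}(S_{n}\in \lbrack -x-s,T-s])\leq C_{1}$ by the Stone local limit theorem, uniformly in the location of the interval, and equicontinuity follows from (\ref{Ucont}), Stone's theorem for the endpoint strip, and the minimum-time decomposition described above.
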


\begin{proof}
First, the fact that the family of functions $F_{n}$ is bounded by $C$
combined with the Stone local limit theorem for iid random variables having
finite variance (see, for instance, \cite{BGT}, Section 8.4) allows us to
bound $g_{n}$ by
\begin{equation*}
C\sqrt{n}\mathbf{P}_{s}(S_{n}\in \lbrack -x,T])=C\sqrt{n}\mathbf{P}(S_{n}\in
\lbrack -x-s,T-s])\leq C_{1}<\infty .
\end{equation*}%
Second,
\begin{eqnarray*}
&&|g_{n}(s+\epsilon )-g_{n}(s)| \\
&&\quad =\sqrt{n}|\mathbf{E}_{s}\left[ F_{n}(\mathbf{S}_{n}+\epsilon \mathbf{%
1}_{n+1});L_{n}+\epsilon \geq -x,S_{n}+\epsilon \leq T\right] \\
&&\quad \quad \qquad -\mathbf{E}_{s}\left[ F_{n}(\mathbf{S}_{n});L_{n}\geq
-x,S_{n}\leq T\right] | \\
&&\quad \ \leq \sqrt{n}|\mathbf{E}_{s}\left[ F_{n}(\mathbf{S}_{n}+\epsilon
\mathbf{1}_{n+1})-F_{n}(\mathbf{S}_{n});L_{n}+\epsilon \geq
-x,S_{n}+\epsilon \leq T\right] | \\
&&\quad \qquad \quad \quad \quad +\sqrt{n}|\mathbf{E}_{s}\left[ F_{n}(%
\mathbf{S}_{n});L_{n}+\epsilon \geq -x,S_{n}+\epsilon \leq T\right] \\
&&\qquad \qquad \qquad \quad \qquad \qquad -\mathbf{E}_{s}\left[ F_{n}(%
\mathbf{S}_{n});L_{n}\geq -x,S_{n}\leq T\right] | \\
&&\quad \leq H_{\epsilon }C+\sqrt{n}|\mathbf{P}_{s}(L_{n}+\epsilon \geq
-x,S_{n}+\epsilon \leq T)-\mathbf{P}_{s}(L_{n}\geq -x,S_{n}\leq T)|,
\end{eqnarray*}%
where $H_{\epsilon }\rightarrow 0$ as $\epsilon \rightarrow 0$ again by the
assumptions on $F_{n}$ and the Stone local limit theorem. Let us prove now
that the last term is small. Indeed,
\begin{eqnarray*}
&&\sqrt{n}|\mathbf{P}_{s}(L_{n}+\epsilon \geq -x,S_{n}+\epsilon \leq T)-%
\mathbf{P}_{s}(L_{n}\geq -x,S_{n}\leq T)| \\
&&\qquad \leq \sqrt{n}\left[ \mathbf{P}_{s}(S_{n}\in \lbrack T-\epsilon ,T])+%
\mathbf{P}_{s}(L_{n}\in \lbrack -x-\epsilon ,-x[,S_{n}\leq T)\right]
\end{eqnarray*}%
and only the second term raises a difficulty. By the total probability
formula with respect to the (first) time $k$ of the minimum \ we have
\begin{eqnarray*}
&&\mathbf{P}_{s}(L_{n}\in \lbrack -x-\epsilon ,-x[,S_{n}\leq T) \\
&&\quad =\mathbf{P}(L_{n}+s+x\in \lbrack -\epsilon ,0[,S_{n}\leq T-s) \\
&&\quad =\sum_{k=0}^{n}\mathbf{P}(S_{1}>S_{k},\cdots
,S_{k-1}>S_{k},S_{k}+s+x\in \lbrack -\epsilon ,0), \\
&&\qquad \qquad \qquad \qquad \qquad \qquad S_{k+1}\geq S_{k},\cdots
,S_{n}\geq S_{k},S_{n}\leq T-s) \\
&&\quad \leq \sum_{k=0}^{n-\left[ \sqrt{n}\right] -1}\mathbf{P}(S_{k}+s+x\in
\lbrack -\epsilon ,0))\mathbf{P}(S_{k+1}\geq S_{k},\cdots ,S_{n}\geq S_{k})
\\
&&\qquad +\sum_{k=n-\left[ \sqrt{n}\right] }^{n}\mathbf{P}%
(S_{1}>S_{k},\cdots ,S_{k-1}>S_{k},S_{k}+s+x\in \lbrack -\epsilon ,0)) \\
&&\qquad \qquad \qquad \qquad \qquad \qquad \qquad \qquad \times \mathbf{P}%
(S_{k+1}\geq S_{k},\cdots ,S_{n}\geq S_{k}).
\end{eqnarray*}%
Now we use the representation
\begin{equation*}
\mathbf{P}(S_{k+1}\geq S_{k},\cdots ,S_{n}\geq S_{k})=\mathbf{P}(L_{n-k}\geq
0)\sim C(n-k+1)^{-\beta }
\end{equation*}%
and the Stone local limit theorem according to which
\begin{equation*}
\sqrt{2\pi n}\mathbf{P}(S_{k}+s+x\in \lbrack -\epsilon ,0[)=\epsilon \exp
\left\{ -\frac{\left( s+x\right) ^{2}}{2\sigma ^{2}n}\right\} +\delta _{n},
\end{equation*}%
where $\delta _{n}\rightarrow 0$ as $n\rightarrow \infty $ uniformly in $%
k\in \lbrack n-\sqrt{n},n]$ and $s+x\in \mathbb{R}$. Hence we conclude that
\begin{eqnarray*}
&&\sqrt{n}\mathbf{P}_{s}(L_{n}\in \lbrack -x-\epsilon ,-x[,S_{n}\leq T) \\
&&\qquad \leq \left( \epsilon +\delta _{n}\right) C\sum_{k=0}^{n-\left[
\sqrt{n}\right] -1}(n-k+1)^{-\beta }+C_{1}\sum_{k=n-\left[ \sqrt{n}\right]
}^{n}(n-k+1)^{-\beta } \\
&&\qquad \leq C_{2}\left( \epsilon +\delta _{n}+\sqrt{n}(\sqrt{n})^{-\beta
}\right) \leq C_{3}\left( \epsilon +\delta _{n}\right) ,
\end{eqnarray*}%
for $n$ large enough, since $\beta >1$. We end up the proof by noting that
all these bounds are uniform with respect to $s$.
\end{proof}

$\newline
$

\begin{proof}[Proof of Theorem \protect\ref{thdecomp}]
We know by Lemmas \ref{L_jump}, \ref{L_jumpBegin} and \ref{L_replace} that
conditionally on the event $\{L_{n}\geq -x,S_{n}\leq T\}$, there is a
(single) big jump, that its size is of order $an$ with a deviation of order$%
\sqrt{n}$ and that the jump happens at the beginning. Taking this into
account and setting $\mathcal{A}_{j}^{M}=\{X_{j}-an\in \lbrack -M\sqrt{n},M%
\sqrt{n}]\}$ we get
\begin{equation*}
\mathbf{E}\left[ F(\mathbf{S}_{j-1})F_{n-j}(\mathbf{S}_{j,n});L_{n}\geq
-x,S_{n}\leq T\right] =\varepsilon _{J,M,n}b_{n}+\sum_{j=0}^{J}A_{j,n}^{M},
\end{equation*}%
where
\begin{equation*}
\lim_{J,M\rightarrow \infty }\sup_{n}\left\vert \varepsilon
_{J,M,n}\right\vert =0
\end{equation*}%
and
\begin{equation*}
A_{j,n}^{M}=\mathbf{E}\left[ F(\mathbf{S}_{j-1})F_{n-j}(\mathbf{S}%
_{j,n});L_{n}\geq -x,\mathcal{A}_{j}^{M},S_{n}\leq T\right] .
\end{equation*}%
By the Markov property we have
\begin{equation}
A_{j,n}^{M}=\mathbf{E}\left[ F(\mathbf{S}_{j-1})1_{\left\{ L_{j-1}\geq
-x\right\} }H_{j,n}^{M}(S_{j-1})\right] ,  \notag
\end{equation}%
where%
\begin{equation*}
H_{j,n}^{M}(s)=\mathbf{E}\left[ 1_{\mathcal{A}^{M}}\mathbf{E}_{s+X}\left[
F_{n-j}(\mathbf{S}_{n-j});L_{n-j}\geq -x,S_{n-j}\leq T\right] \right]
\end{equation*}%
and $\mathcal{A}^{M}=\{X-an\in \lbrack -M\sqrt{n},M\sqrt{n}]\}.$

STEP 1. We are proceeding by bounded convergence and show first the simple
convergence. Thus, we consider
\begin{eqnarray}
b_{n}^{-1}H_{j,n}^{M}(s) &=&\int_{-M\sqrt{n}+an}^{M\sqrt{n}+an}b_{n}^{-1}%
\mathbf{P}(X\in dy)\mathbf{E}_{s+y}\left[ F_{n-j}(\mathbf{S}%
_{n-j});L_{n-j}\geq -x,S_{n-j}\leq T\right]  \notag \\
&=&\frac{1}{\sqrt{n}}\int_{-M\sqrt{n}+an}^{M\sqrt{n}+an}g_{j,n}(s+y)\mu
_{n}(dy),  \notag
\end{eqnarray}%
where
\begin{equation*}
\mu _{n}(dy)=b_{n}^{-1}\mathbf{P}(X\in dy),\quad g_{j,n}(s)=\sqrt{n}\mathbf{E%
}_{s}\left[ F_{n-j}(\mathbf{S}_{n-j});L_{n-j}\geq -x,S_{n-j}\leq T\right] .
\end{equation*}%
We want to prove that
\begin{equation*}
\frac{1}{\sqrt{n}}\int_{-M\sqrt{n}+an}^{M\sqrt{n}+an}g_{j,n}(s+y)\mu
_{n}(dy)-\frac{1}{\sqrt{n}}\int_{-M\sqrt{n}+an}^{M\sqrt{n}%
+an}g_{j,n}(s+y)dy\rightarrow 0
\end{equation*}%
as $n\rightarrow \infty $ by using the local converges of $\mu _{n}$ to the
Lebesgue measure (with uniformity in $y\in \left[ an-M\sqrt{n},an+M\sqrt{n}%
\right] $ thanks to (\ref{Tail0}) and (\ref{remainder})) and the uniform
equicontinuity of $g_{j,n}$ (compare with Lemma \ref{unif}). Let us give the
details. \newline
First, by Lemma \ref{unif} for any $\varepsilon >0$ there exists $\eta >0$
such that for all $n\geq n_{0}=n_{0}(\varepsilon ,\eta )$ we have
\begin{equation*}
\sup_{y}\sup_{u\in \lbrack 0,\eta ]}|g_{j,n}(y)-g_{j,n}(y+u)|\leq \epsilon .
\end{equation*}%
Let, further, $s_{i}(=s_{i}^{n})$ be a division of $[an-M\sqrt{n},an+M\sqrt{n%
}-1]$ into subintervals with step $\eta $. Then, for sufficiently large $%
n\geq n_{0}$,
\begin{equation*}
\left\vert b_{n}^{-1}H_{j,n}^{M}(s)-\frac{1}{\sqrt{n}}\sum_{i}g_{j,n}(s_{i})%
\mu _{n}[s_{i},s_{i+1})\right\vert \leq 3M\epsilon .
\end{equation*}%
Besides, $g_{j,n}(y)$ is bounded by $C$ with respect to the pair $n,y$ by
Lemma \ref{unif}. Recalling that by (\ref{remainder})
\begin{equation*}
\sup_{y\in s+an+[-M\sqrt{n},M\sqrt{n}]}|\mu _{n}[y,y+\eta ]-\eta |\leq
\epsilon \eta
\end{equation*}%
for $n$ large enough, we get
\begin{equation*}
\left\vert b_{n}^{-1}H_{j,n}^{M}(s)-\frac{1}{\sqrt{n}}\sum_{i}g_{j,n}(s_{i})%
\eta \right\vert \leq 3M\epsilon +2CM\epsilon \eta \frac{1}{\eta }.
\end{equation*}%
Using again the uniform continuity of $g_{j,n}$ yields for $n$ large enough
\begin{equation*}
\left\vert \frac{1}{\sqrt{n}}\sum_{i}g_{j,n}(s_{i})\eta -\frac{1}{\sqrt{n}}%
\int_{-M\sqrt{n}+an}^{M\sqrt{n}+an}g_{j,n}(s+y)dy\right\vert \leq 3\epsilon
M,
\end{equation*}%
resulting in
\begin{equation*}
\left\vert b_{n}^{-1}H_{j,n}^{M}(s)-\frac{1}{\sqrt{n}}\int_{-M\sqrt{n}+an}^{M%
\sqrt{n}+an}g_{j,n}(s+y)dy\right\vert \leq M(6+2C)\epsilon
\end{equation*}%
for $n$ large enough.

Clearly, $b_{n}^{-1}H_{j,n}^{M},n=j+1.j+2,...$ is a bounded sequence since
both $g_{j,n}$ (see Lemma \ref{unif}) and $\mu _{n}([an-M\sqrt{n},an+M\sqrt{n%
}])/\sqrt{n}$ are bounded. This and the dominated convergence theorem lead
to
\begin{equation}
b_{n}^{-1}A_{j,n}^{M}-\mathbf{E}\left[ F(\mathbf{S}_{j-1})1_{\left\{
L_{j-1}\geq -x\right\} }\frac{1}{\sqrt{n}}\int_{-M\sqrt{n}+an}^{M\sqrt{n}%
+an}g_{j,n}(S_{j-1}+y)dy\right] \overset{n\rightarrow \infty }{%
\longrightarrow }0.  \label{asA}
\end{equation}

STEP 2. We can now complete the proof by reversing the random walk after
time $j$. To this aim set $\mathbf{s}_{k}=\left( s_{0},\cdots ,s_{k}\right) $
and $\mathbf{s}_{n,0}=\left( s_{n},\cdots ,s_{0}\right) $ and recall that
(see, for instance, Lemma 9 in \cite{Hir98})
\begin{equation*}
ds_{0}\mathbf{P}_{s_{0}}(\mathbf{S}_{n}\in d\mathbf{s}_{n})=ds_{n}\mathbf{P}%
_{s_{n}}(\mathbf{S}_{n,0}^{\prime }\in d\mathbf{s}_{n,0}).
\end{equation*}%
Hence, letting
\begin{equation*}
\mathcal{B}_{n}(\mathbf{s}_{k})=\{\left\vert s_{0}-an-s\right\vert \leq M%
\sqrt{n},\,s_{k}\in \lbrack -x,T],\min_{0\leq i\leq k}s_{i}\geq -x\}
\end{equation*}%
we get by integration
\begin{eqnarray*}
&&\int 1_{\mathcal{B}_{n}(\mathbf{s}_{n-j})}F_{n-j}(\mathbf{s}_{n-j})ds_{0}%
\mathbf{P}_{s_{0}}(\mathbf{S}_{n-j}\in d\mathbf{s}_{n-j}) \\
&&\quad =\int 1_{\mathcal{B}_{n}(\mathbf{s}_{n-j})}F_{n-j}(\mathbf{s}%
_{n-j})ds_{n-j}\mathbf{P}_{s_{n-j}}(\mathbf{S}_{n-j,0}^{\prime }\in d\mathbf{%
s}_{n-j,0}).
\end{eqnarray*}%
It follows that
\begin{eqnarray*}
&&\frac{1}{\sqrt{n}}\int_{-M\sqrt{n}+an}^{M\sqrt{n}+an}g_{j,n}(s+y)dy \\
&&\ \ =\int_{s_{0}^{\prime }\in \lbrack -x,T]}\mathbf{E}_{s_{0}^{\prime }}%
\left[ F_{n-j}(\mathbf{S}_{n-j,0}^{\prime });L_{n-j}^{\prime }\geq
-x;\left\vert S_{n-j}^{\prime }-an-s\right\vert \leq M\sqrt{n}\right]
ds_{0}^{\prime }.
\end{eqnarray*}%
Since, as $n\rightarrow \infty $
\begin{equation*}
\mathbf{P}(\left\vert S_{n-j}^{\prime }-an-s\right\vert \leq M\sqrt{n}%
)\rightarrow \frac{1}{\sigma \sqrt{2\pi }}\int_{-M}^{M}\exp \left\{ -\frac{%
y^{2}}{2\sigma ^{2}}\right\} dy
\end{equation*}%
for every $s\in \mathbb{R}$ and $\mathbf{S}^{\prime }$ has a positive drift,
we conclude that
\begin{eqnarray*}
&&K^{M}(s)=\limsup_{n\rightarrow \infty }\bigg\vert\frac{1}{\sqrt{n}}\int_{-M%
\sqrt{n}+an}^{M\sqrt{n}+an}g_{j,n}(s+y)dy \\
&&\qquad \qquad \qquad \qquad -\int_{s_{0}^{\prime }\in \lbrack -x,T]}%
\mathbf{E}_{s_{0}^{\prime }}\left[ F_{n-j}(\mathbf{S}_{n-j,0}^{\prime
});L_{\infty }^{\prime }\geq -x\right] ds_{0}^{\prime }\bigg\vert
\end{eqnarray*}%
goes to $0$ as $M$ becomes large. Further, by the bounded convergence and
taking into account the boundness of $g_{j,n}$, we get (recall (\ref{DefMu}%
))
\begin{eqnarray*}
&&\limsup_{n\rightarrow \infty }\bigg\vert\mathbf{E}\left[ F(\mathbf{S}%
_{j-1})1_{\left\{ L_{j-1}\geq -x\right\} }\frac{1}{\sqrt{n}}\int_{-M\sqrt{n}%
+an}^{M\sqrt{n}+an}g_{j,n}(S_{j-1}+y)dy\right]  \\
&&\qquad \qquad -\mathbf{E}\left[ F(\mathbf{S}_{j-1})1_{\left\{ L_{j-1}\geq
-x\right\} }\theta \mathbf{E}_{\mu }\left[ F_{n-j}(\mathbf{S}%
_{n-j,0}^{\prime })|L_{\infty }^{\prime }\geq -x\right] \ \right] \bigg\vert
\\
&&\quad \qquad \qquad \qquad \qquad \qquad \qquad \leq \mathbf{E}\left[ F(%
\mathbf{S}_{j-1})1_{\left\{ L_{j-1}\geq -x\right\} }K^{M}(S_{j-1})\right]
\end{eqnarray*}%
where the right-hand side goes to $0$ as $M\rightarrow \infty $. Using (\ref%
{DefMu}) once again we set
\begin{equation*}
D_{j,n}=\mathbf{E}\left[ F(\mathbf{S}_{j-1})|L_{j-1}\geq -x\right] \mathbf{E}%
_{\mu }\left[ F_{n-j}(\mathbf{S}_{n-j}^{\prime })|L_{\infty }^{\prime }\geq
-x\right]
\end{equation*}%
and deduce from $(\ref{asA})$ that the function
\begin{equation*}
R^{M}=\limsup_{n\rightarrow \infty }\left\vert b_{n}^{-1}A_{j,n}^{M}-\mathbf{%
P}(L_{j-1}\geq -x)\theta D_{j,n}\right\vert
\end{equation*}%
goes to zero as $M\rightarrow \infty $. Writing
\begin{equation*}
C_{j,n}=\mathbf{E}\left[ F(\mathbf{S}_{j-1})F_{n-j}(\mathbf{S}%
_{j,n});L_{n}\geq -x;\ S_{n}\leq T;\ X_{j}\geq an/2\right]
\end{equation*}%
we have
\begin{eqnarray*}
&&\limsup_{n\rightarrow \infty }\left\vert b_{n}^{-1}C_{j,n}-\mathbf{P}%
(L_{j-1}\geq -x)\theta D_{j,n}\right\vert  \\
&&\qquad \leq \limsup_{n\rightarrow \infty }b_{n}^{-1}\mathbf{P}(X_{j}\geq
an/2,|X_{j}-an|>M\sqrt{n},\ S_{n}\leq T)+R^{M}.
\end{eqnarray*}%
Combining the last limit and Lemma \ref{L_replace} ensures that the
right-hand side of this inequality goes to $0$ as $M\rightarrow \infty $. We
conclude
\begin{equation*}
\lim_{n\rightarrow \infty }\left( b_{n}^{-1}C_{j,n}-\mathbf{P}(L_{j-1}\geq
-x)\theta D_{j,n}\right) =0.
\end{equation*}

STEP 4. We apply the limit above to the family of functions $F=1,F_{n-j}=1$
and get
\begin{equation}
b_{n}^{-1}\mathbf{P}\left( L_{n}\geq -x,\ S_{n}\leq T,\ X_{j}\geq
an/2\right) \overset{n\rightarrow \infty }{\longrightarrow }\mathbf{P}%
(L_{j-1}\geq -x)\theta .  \label{pi}
\end{equation}%
Recalling (\ref{NonZeroL}) ensures that there exists $\pi _{j}(x)>0$ such
that
\begin{equation*}
\mathbf{P}\left( X_{j}\geq an/2\ |\ L_{n}\geq -x,\ S_{n}\leq T\right)
\overset{n\rightarrow \infty }{\longrightarrow }\pi _{j}(x).
\end{equation*}%
Using Lemmas \ref{L_jump}, \ref{L_jumpBegin} and \ref{L_replace} shows that
there is only one big jump at the beginning, and it has to be greater than $%
an/2$. Thus, $\sum_{j\geq 0}\pi _{j}(x)=1$. Finally, the proof of the
Theorem can be completed by using again the conclusion of STEP 3. \newline

\textbf{Acknowledgement.} This work was partially funded by the project
MANEGE `Mod\`{e}les Al\'{e}atoires en \'{E}cologie, G\'{e}n\'{e}tique et
\'{E}volution' 09-BLAN-0215 of ANR (French national research agency), Chair
Modelisation Mathematique et Biodiversite VEOLIA-Ecole
Polytechnique-MNHN-F.X. and the professorial chair Jean Marjoulet. The
second author was also supported by the Program of the Russian Academy of
Sciences \textquotedblleft Dynamical systems and control
theory\textquotedblright .
\end{proof}

\end{document}